\documentclass[11pt, reqno, a4paper]{amsart}
\usepackage[margin=1.2in]{geometry}
\numberwithin{equation}{section}
\usepackage{amssymb,amsfonts,amsthm}
\usepackage[numbers,sort&compress]{natbib}
\usepackage[utf8]{inputenc}
\usepackage{listings}
\usepackage{bm}
\usepackage[hyperpageref]{backref}
\usepackage{esint}
\usepackage{color}
\usepackage[bookmarks]{hyperref}
\usepackage{siunitx}
\usepackage{bigints}
\usepackage{tikz}
\usepackage{subcaption}
\usepackage{hyperref}
\usepackage{mathtools}

\allowdisplaybreaks[4]

\newtheoremstyle{myremark}{10pt}{10pt}{}{}{\bfseries}{.}{.5em}{}

\newtheorem{theorem}{Theorem}[section]

\newtheorem{lemma}[theorem]{Lemma}

\theoremstyle{definition}

\newtheorem{remark}{Remark}[section]

\usepackage{hyperref}

\allowdisplaybreaks[4]

\begin{document}

\title[Quantitative stability for the critical Hardy inequality]{Improved Quantitative Stability for the Critical Hardy Inequality}

\author{Vivek Sahu}

\address{ Theoretical Statistics and Mathematics Unit,
Indian Statistical Institute, Delhi Centre, S.J. Sansanwal Marg, New Delhi, Delhi 110016, India}
\email{vivek@isid.ac.in,  viiveksahu@gmail.com}

\subjclass[2020]{26D10; 46E35; 35A23}

\keywords{Quantitative stability,  Hardy inequality, Critical case, logarithmic weight}

\date{}

\dedicatory{}

\begin{abstract}
We establish a quantitative stability estimate for the critical Hardy inequality on bounded domains containing the origin. Our result improves the existing quantitative stability estimate by reducing the exponent in the distance function from $N^{2}$ to $N$, replacing the Lorentz--Zygmund framework with the Luxemburg norm of the critical exponential Orlicz space $\operatorname{Exp}L^{\frac{N}{N-1}}(\Omega)$, and avoiding any cut-off modification of the virtual extremizers. As a consequence, we also obtain an improved quantitative stability estimate for the critical Hardy inequality with the logarithmic weight considered by Cianchi and Ferone, where the exponent is likewise reduced from $N^{2}$ to $N$ and the distance is measured directly from the virtual extremizer without truncation. The proof is completely rearrangement-free and relies on a critical Hardy inequality with a remainder term, scale-invariant Sobolev inequalities, and a refined dyadic summation argument. These ingredients yield stronger quantitative stability estimates for both forms of the critical Hardy inequality.
\end{abstract}

\maketitle


\section{Introduction}\label{Section 1}

Hardy inequalities are among the most fundamental functional inequalities in analysis and partial differential equations. One remarkable feature of the sharp Hardy inequality is that its optimal constant is not attained in the natural Sobolev space. This naturally leads to the study of quantitative stability, where one aims to understand the behaviour of functions that are close to virtual extremals and to measure their distance from the family of virtual
extremals.

\smallskip

Let $N\geq 2$ and $1<p<N$. The classical Hardy inequality on
$\mathbb{R}^{N}$ states that
\begin{equation}\label{eq:classical-hardy}
   \int_{\mathbb{R}^{N}}
    |\nabla u(x)|^{p}\,dx \geq  \left(\frac{N-p}{p}\right)^{p}
    \int_{\mathbb{R}^{N}}
    \frac{|u(x)|^{p}}{|x|^{p}}\,dx,
\end{equation}
for every $ u \in \mathcal{D}^{1,p}(\mathbb{R}^{N})$, where
$\mathcal{D}^{1,p}(\mathbb{R}^{N})$ denotes the completion of
$C_{c}^{\infty}(\mathbb{R}^{N})$ with respect to 
$\|\nabla u\|_{L^{p}(\mathbb{R}^{N})}$. The constant $ \left(\frac{N-p}{p}\right)^{p}$
is optimal. However, it is not attained by any nonzero function in the homogeneous Sobolev space $\mathcal{D}^{1,p}(\mathbb{R}^{N})$. The formal extremals associated with \eqref{eq:classical-hardy} are given by
\begin{equation}\label{eq:subcritical-virtual-extremals}
    v_{a}(x)
    =
    a \, |x|^{-\frac{N-p}{p}},
    \qquad
    a\in\mathbb{R} \setminus \{ 0\}.
\end{equation}
These functions do not belong to $\mathcal{D}^{1,p}(\mathbb{R}^{N})$.
Therefore, they are called the \textit{virtual extremals} of the Hardy inequality.

The non-attainment of the optimal constant leads to the problem of
quantitative stability. The main aim is to estimate the Hardy deficit
\begin{equation*}
    \mathcal{H}_{p}(u)
    :=
    \int_{\mathbb{R}^{N}}
    |\nabla u(x)|^{p}\,dx
    -
    \left(\frac{N-p}{p}\right)^{p}
    \int_{\mathbb{R}^{N}}
    \frac{|u(x)|^{p}}{|x|^{p}}\,dx
\end{equation*}
in terms of the distance of $u$ from the family $    \mathcal{M}_{p} := \left\{ a \, |x|^{-\frac{N-p}{p}}: a\in\mathbb{R} \right\}$.

\smallskip

A seminal quantitative improvement of \eqref{eq:classical-hardy} was obtained by Cianchi and Ferone \cite[Theorem $1.1$]{CianchiFerone}. The virtual extremals $v_a$, defined by \eqref{eq:subcritical-virtual-extremals}, belong to the Marcinkiewicz (weak Lorentz) space $L^{p^{*},\infty} (\mathbb{R}^{N})$, where $p^{*}:=\frac{Np}{N-p}$. Motivated by this fact, they introduced the normalized distance
\begin{equation*}
    d_{p}(u)
    :=
    \inf_{a\in\mathbb{R}}
    \frac{
        \|u-v_{a}\|_{L^{p^{*},\infty}(\mathbb{R}^{N})}
    }{
        \|u\|_{L^{p^{*},p}(\mathbb{R}^{N})}
    },
\end{equation*}
where $v_{a}$ is given by \eqref{eq:subcritical-virtual-extremals}, $L^{p^{*},\infty}(\mathbb{R}^{N})$ denotes the Marcinkiewicz (weak Lorentz) space, and $L^{p^{*},p}(\mathbb{R}^{N})$ denotes the Lorentz space. They proved that there exists a
constant $C=C(N,p)>0$ such that
\begin{equation}\label{eq:Cianchi-Ferone-subcritical}
     \mathcal{H}_{p}(u)
    \geq
    C
    \left(
        \int_{\mathbb{R}^{N}}
        \frac{|u(x)|^{p}}{|x|^{p}}\,dx
    \right)
    d_{p}(u)^{2p^{*}},
\end{equation}
for every real-valued weakly differentiable function $u$ on
$\mathbb{R}^{N}$ such that $ |\nabla u|\in L^{p} (\mathbb{R}^{N})$ and $u$ decays to zero at infinity. Thus, if a sequence is close to equality in the Hardy inequality \eqref{eq:classical-hardy}, then it
must be close to the family of virtual extremals in the weak Lorentz space
$L^{p^{*},\infty}(\mathbb{R}^{N})$. The use of this space is natural because
it is the smallest rearrangement-invariant space containing the functions
$v_{a}$ (see \cite[Proposition $2.3$]{CianchiFerone}).

The stability estimate \eqref{eq:Cianchi-Ferone-subcritical} naturally raises
the question of whether the exponent of the distance function can be further
improved. Recently, in \cite{BanerjeeGangulySahu}, the quantitative stability for the fractional Hardy inequality was established in the subcritical case. Moreover, the method developed there was used to revisit the quantitative stability for the classical Hardy inequality, leading to an improvement in the exponent of the distance function. In particular, the following estimate was obtained (see \cite[Theorem $1.3$]{BanerjeeGangulySahu}):
\begin{equation*}
    \mathcal{H}_{p}(u)
    \geq
    C \left(  \int_{\mathbb{R}^{N}} \frac{|u(x)|^{p}}{|x|^{p}}\,dx \right) d_{p}(u)^{\max\{4,2p\}}.
\end{equation*}
This improves the exponent of the distance term from $2p^{*}$ in the result of Cianchi and Ferone \cite[Theorem $1.1$]{CianchiFerone} to $\max\{4,2p\}$, while using the same distance function. 

\smallskip

We now turn to the limiting case $p=N$. In this case, the weight in the Hardy inequality becomes $|x|^{-N}$, which is neither locally integrable near the origin nor integrable at infinity. Therefore, the direct analogue of the Hardy inequality \eqref{eq:classical-hardy} cannot hold on $\mathbb{R}^{N}$. To overcome this difficulty, the critical Hardy inequality is reformulated on bounded domains by incorporating a logarithmic correction that weakens the singularity at the origin. Let $\Omega\subset\mathbb{R}^{N}$ be a bounded domain containing the origin, where $N \geq 2$, and let $\widetilde{R}:=\sup_{x\in\Omega}|x|$. For $R\geq \widetilde{R}$, the critical Hardy inequality takes the form
\begin{equation}\label{Hardy critical Cianchi}
    \int_{\Omega} |\nabla u(x)|^{N}\,dx
    \geq
    \left(\frac{N-1}{N}\right)^{N}
    \bigintsss_{\Omega}
    \frac{|u(x)|^{N}}
    {|x|^{N}\left(1+\log\frac{R}{|x|}\right)^{N}}
    \,dx,
    \qquad \forall \,
    u\in W^{1,N}_{0}(\Omega),
\end{equation}
where $W^{1,N}_{0}(\Omega)$ is the completion of $C^{\infty}_{c}(\Omega)$ with respect to the norm $\| \cdot \|_{W^{1,N}(\Omega)} $. The constant $\left(\frac{N-1}{N}\right)^{N}$ is optimal, but it is not attained. The corresponding family of virtual extremals is given by
\begin{equation}\label{Extremizer cianchi}
    z_{a}(x)
    =
    a\left[
    \left(
        1+\log\frac{R}{|x|}
    \right)^{\frac{N-1}{N}}
    -Q
    \right]_{+},
    \qquad \text{for} \,  x\in\Omega,
\end{equation}
where $a\in\mathbb{R}\setminus\{0\}$ and
$Q>\left(1+\log\frac{R}{r_{\Omega}}\right)^{(N-1)/N)}$, with $r_{\Omega}
:= \sup\left\{ r>0:B_{r}(0)\subset\Omega \right\}$, so that the support of $z_{a}$ is contained in $\Omega$.

Following the same philosophy as in the subcritical case, Cianchi and Ferone
\cite[Theorem $1.2$]{CianchiFerone} measured the distance of a function from the family of
virtual extremals by introducing the distance functional
\begin{equation*}
    d_{C,R,Q}(u)
    :=
    \inf_{a\in\mathbb{R}}
    \bigintsss_{\Omega}
    \left[
    \exp\left(
    \frac{
    C|u(x)-z_{a}(x)|^{\frac{N}{N-1}}
    }
    {
    \|u\|_{L^{\infty,N}(\log L)^{-1}(\Omega),R}^{\frac{N}{N-1}}
    }
    \right)
    -1
    \right]dx,
\end{equation*}
where $C>0$, and $\|\cdot\|_{L^{\infty,N}(\log L)^{-1}(\Omega),R}$ denotes a family of
equivalent norms on the Lorentz--Zygmund space
$L^{\infty,N}(\log L)^{-1}(\Omega)$, defined by
\begin{equation}\label{Lorentz Zygmund Norm}
    \|u\|_{L^{\infty,N}(\log L)^{-1}(\Omega),R}
    := \left(
    \bigintsss_{0}^{|\Omega|}
    \frac{u^{*}(s)^{N}}
    {\left( N+\log\frac{|B_{1}|R^{N}}{s} \right)^{N}}
    \,\frac{ds}{s} \right)^{\frac{1}{N}},
\end{equation}
where $u^{*}$ denotes the decreasing rearrangement of $u$ defined by
\begin{equation}\label{decreasing rearrangement}
    u^{*}(t):=\inf\left\{s\geq 0:\,|\{x\in\Omega:|u(x)|>s\}|\leq t\right\}, \qquad \text{for} \hspace{.2cm} t\geq 0,
\end{equation}
and $|B_{1}|$ denotes the Lebesgue measure of the unit ball in $\mathbb{R}^{N}$.

 It is important to note that the Lorentz--Zygmund norm in
\eqref{Lorentz Zygmund Norm} cannot itself be used to define the distance
from the virtual extremals, since the functions $z_{a} \notin L^{\infty,N}(\log L)^{-1}(\Omega), R$. The exponential expression appearing in the definition of $d_{C,R,Q}$
therefore provides an appropriate way to measure the deviation of $u$ from the family of virtual extremals. They also proved that the Orlicz space associated with the Young function $\Phi(t):=\exp\left(t^{\frac{N}{N-1}}\right)-1$  is the smallest rearrangement-invariant space containing the family of virtual extremals \eqref{Extremizer cianchi}. They then established a quantitative stability estimate for the critical
Hardy inequality. 

They proved that for $R > \widetilde{R}$, there exists a constant
$C=C(N,R,\widetilde{R},Q)>0$ such that
\begin{align}\label{Cianchi Quantitative}
    \int_{\Omega} |\nabla u(x)|^{N}\,dx
    - &
    \left(\frac{N-1}{N}\right)^{N}
    \bigintsss_{\Omega}
    \frac{|u(x)|^{N}}
    {|x|^{N}
    \left(
    1+\log\frac{R}{|x|}
    \right)^{N}}
    \,dx \nonumber \\ &
    \geq
    \left(\frac{N-1}{N}\right)^{N}
    \left(
    \bigintsss_{\Omega}
    \frac{|u(x)|^{N}}
    {|x|^{N}
    \left(
    1+\log\frac{R}{|x|}
    \right)^{N}}
    \,dx
    \right)
    \left(d_{C,R,Q}(u) \right)^{N^{2}}.
\end{align}

Motivated by the above results, in this paper we revisit the quantitative
stability estimate for the critical Hardy inequality established by Cianchi
and Ferone \cite[Theorem~1.2]{CianchiFerone}. Our first main contribution is
to improve the exponent of the distance term from $N^{2}$ to $N$. Moreover,
instead of using the Lorentz--Zygmund norm, we introduce a new remainder term
based on the Luxemburg norm of the Orlicz space
$\operatorname{Exp}L^{\frac{N}{N-1}}(\Omega)$. This provides a stronger and
more natural quantitative stability estimate for the critical Hardy
inequality.

More precisely, we consider the critical Hardy inequality
established by Ioku and Ishiwata \cite[Remark $1.3$]{IokuIshiwata2015}. Let
$\Omega$ be a bounded domain in $\mathbb{R}^{N}$ containing the origin, and
define $\widetilde{R}:=\sup_{x\in\Omega}|x|$. Then, for every $u\in W_{0}^{1,N}(\Omega)$,
\begin{equation}\label{Critical Hardy}
    \int_{\Omega} |\nabla u(x)|^{N}\,dx
    \geq
    \left(\frac{N-1}{N}\right)^{N}
    \bigintsss_{\Omega}
    \frac{|u(x)|^{N}}
    {|x|^{N}\left( \log\frac{\widetilde{R}}{|x|} \right)^{N}}
    \,dx.
\end{equation}
Moreover, \cite[Remark~2.1]{IokuIshiwata2015} observed that the function
\begin{equation}\label{Defn: Omega}
    \omega_{\widetilde{R}}(x)
    :=
    \left(
    \log\frac{\widetilde{R}}{|x|}
    \right)^{\frac{N-1}{N}}
\end{equation}
satisfies the Euler--Lagrange equation associated with the above critical
Hardy inequality. Since $\omega_{\widetilde{R}} \notin W_{0}^{1,N}(\Omega)$, the optimal
constant is not attained. Therefore, $\omega_{\widetilde{R}}$ is called the
\emph{virtual extremizer} of the critical Hardy inequality. It is not difficult to see that $\omega_{\widetilde{R}} \in L^{q}(\Omega)$ for all $1 \leq q < \infty$ and $ \omega_{\widetilde{R}} \in \operatorname{Exp}L^{\frac{N}{N-1}}(\Omega)$, where $\operatorname{Exp}L^{\frac{N}{N-1}}(\Omega)$ is defined in
\eqref{Defn: Exp Norm} (see Lemma \ref{Lemma: On virtual Extremizers} for a proof).

Although the critical Hardy inequality considered here has the same optimal
constant as that of Cianchi and Ferone \cite{CianchiFerone}, the underlying Hardy inequality is different. We consider the critical Hardy inequality of
Ioku and Ishiwata \cite{IokuIshiwata2015}, where the logarithmic weight is
given by $|x|^{-N}\left(\log\widetilde{R}/|x|\right)^{-N}$, which is more singular than the weight $|x|^{-N}\left(1+\log R/|x|\right)^{-N}$ appearing in \cite{CianchiFerone}. Indeed, besides the singularity at the
origin, the weight
$|x|^{-N}\left(\log\widetilde{R}/|x|\right)^{-N}$
also becomes singular as $|x|\to\widetilde{R}$, whereas the weight in
\eqref{Hardy critical Cianchi} is singular only at the origin. The logarithmic weight in \cite{CianchiFerone} was chosen because it allows the use of the Hardy--Littlewood inequality and rearrangement techniques. In contrast, such rearrangement arguments are no longer applicable to the stronger critical Hardy weight, owing to its additional singularity near the boundary. Consequently, the
associated Euler--Lagrange equation is different, leading to a different
family of virtual extremizers. This naturally motivates the introduction of a
new distance function and enables us to establish a new quantitative
stability estimate. As a consequence, we establish a new quantitative stability estimate for the critical Hardy inequality. Although our proof is based on the more singular critical Hardy weight, the stability estimate is ultimately obtained in terms of the logarithmic weight $\omega_{R}$, for $R> \widetilde{R}$.

Let $\Omega$ be a bounded domain in $\mathbb{R}^{N}$ containing the origin. Let $\widetilde{R}=\sup_{x\in\Omega}|x|$ and fix $R>\widetilde{R}$. For $C>0$, we define the distance function
\begin{equation} \mathcal{D}_{N,R,C}(u) := \inf_{a \in \mathbb{R}} \bigintsss_{\Omega} \left[ \exp \left( \frac{C |u(x) - a \ \omega_{R}(x)|^{\frac{N}{N-1}}}{\|u\|^{\frac{N}{N-1}}_{\operatorname{Exp} L^{\frac{N}{N-1}}(\Omega)}} \right) - 1 \right] \, dx , \end{equation}
where $\omega_{R}$ is defined in \eqref{Defn: Omega}, and
$\|u\|_{\operatorname{Exp}L^{\frac{N}{N-1}}(\Omega)}$ denotes the Luxemburg
norm defined in \eqref{Defn: Exp Norm}. Our main result is the
following.

\begin{theorem}\label{Theorem: Quantitative stability critical Hardy}
Let $\Omega$ be a bounded open set containing the origin in $\mathbb{R}^{N}$, where $N \geq 2$, and let $\widetilde{R} = \sup_{x \in \Omega} |x|$ and $R> \widetilde{R}$. Then there exists a constant
$C=C(N,R,\widetilde{R})>0$ such that
\begin{equation*}
\int_{\Omega} |\nabla u(x)|^{N}\,dx
-
\left(\frac{N-1}{N}\right)^{N}
\bigintsss_{\Omega}
\frac{|u(x)|^{N}}
{|x|^{N} \left( \log \frac{R}{|x|} \right)^{N}}
\,dx
\geq
\|u\|^{N}_{\operatorname{Exp}L^{\frac{N}{N-1}}(\Omega)} \, \bigl(\mathcal{D}_{N,R,C}(u)\bigr)^{N},
\end{equation*}
for every $u\in W^{1,N}_{0}(\Omega)$.
\end{theorem}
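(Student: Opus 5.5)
The plan is to use the ground-state substitution with the virtual extremizer $\omega_{\widetilde R}$ for the more singular Ioku--Ishiwata inequality \eqref{Critical Hardy}, and to pass from $\widetilde R$ to $R$ at the end. For $u\in C_c^\infty(\Omega)$ we write $u=\omega_{\widetilde R}\,v$. The standard convexity identity for $|\nabla u|^N$ (the $p$-Laplacian Picone-type remainder) then gives a critical Hardy inequality with remainder of the form
\begin{equation*}
\int_\Omega|\nabla u|^N\,dx-\Big(\tfrac{N-1}{N}\Big)^N\int_\Omega\frac{|u|^N}{|x|^N(\log\frac{\widetilde R}{|x|})^N}\,dx\;\geq\; c_N\int_\Omega \omega_{\widetilde R}^{N}|\nabla v|^N\,dx ,
\end{equation*}
which is valid for $N\ge2$ via the elementary inequality $|a+b|^N\ge|a|^N+N|a|^{N-2}a\cdot b+c_N|b|^N$. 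Since the weight with $\widetilde R$ dominates the weight with $R$, the left-hand side of the theorem is bounded below by the right-hand side above. It therefore suffices to show
\begin{equation*}
c_N\int_\Omega\omega_{\widetilde R}^N|\nabla v|^N\,dx\;\geq\;\|u\|^N_{\operatorname{Exp}L^{\frac{N}{N-1}}}\,\mathcal D_{N,R,C}(u)^N
\end{equation*}
for a suitable choice of the constant $a$.

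The next step is a dyadic decomposition in the logarithmic variable. Set $t=\log(\widetilde R/|x|)$ and $A_k=\{x\in\Omega:2^{k}\le 1+t<2^{k+1}\}$, or annuli $\{e^{-k-1}\widetilde R<|x|\le e^{-k}\widetilde R\}$. On each annulus we apply a scale-invariant Sobolev (Poincaré) inequality to $v$. The $W^{1,N}$ norm is dilation invariant, so each annulus has a uniform constant, and $\omega_{\widetilde R}^N\sim(1+t)^{N-1}$ is essentially constant there. This controls the oscillation of $v$ around its annular mean $v_k$ in $L^q$, and by a Trudinger-type bound on the annulus in $\exp L^{N/(N-1)}$, by $\big(\int_{A_k}\omega^N|\nabla v|^N\big)^{1/N}$ times weights in $k$. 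Consecutive means satisfy $|v_k-v_{k+1}|\lesssim \epsilon_k:=\big(\int_{A_k\cup A_{k+1}}|\nabla v|^N\big)^{1/N}$. Since $u$ vanishes near $\partial\Omega$, the means $v_k$ converge, for $k\to\infty$ (that is, near the origin), to a limit $a$; this will be our choice of $a$. A Hölder/telescoping argument then bounds $|v_k-a|\le\sum_{j\ge k}\epsilon_j$, weighted against $\omega^N\sim j^{N-1}$ in logarithmic scales. This is the refined dyadic summation.

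We then estimate $u-a\,\omega_R$. First write $u-a\omega_{\widetilde R}=\omega_{\widetilde R}(v-a)$, and handle the difference $a(\omega_{\widetilde R}-\omega_R)$ separately. That difference is bounded near the origin, since $\omega_R-\omega_{\widetilde R}\to 0$ like $(\log)^{-1/N}$, but it is not small near $|x|=\widetilde R$. So we must check that $|a|$ is controlled by $\|u\|_{\operatorname{Exp}}$ and the deficit term. This should follow because $|a|\,\omega_{\widetilde R}$ is close to $u$ near the origin and $\omega\in \operatorname{Exp}L^{N/(N-1)}$ (Lemma \ref{Lemma: On virtual Extremizers}); where it fails, the infimum over $a$ lets us choose $a=0$, for which $\mathcal D$ is bounded by a constant depending on $C$ and $|\Omega|$. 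Summing the annular exponential integrals with the normalisation by $\|u\|_{\operatorname{Exp}}$, and choosing $C$ small, we aim to obtain
\begin{equation*}
\mathcal D_{N,R,C}(u)\lesssim \frac{\big(\int\omega^N|\nabla v|^N\big)^{1/N}}{\|u\|_{\operatorname{Exp}}},
\end{equation*}
which is exactly the claim with exponent $N$. A density argument then extends the inequality to all of $W_0^{1,N}(\Omega)$.

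The main obstacle is the summation step. The exponential Orlicz integral must be bounded linearly, not merely logarithmically, by the remainder, uniformly over infinitely many annuli accumulating at the origin. The plan is to use convexity of $\exp(\cdot)-1$ together with the fact that, for small arguments, $\exp(s)-1\le s\,e^{s_0}/s_0$. This reduces the problem to showing that the normalised quantity is at most $1$ after choosing $C$ small, with the Trudinger exponent on each annulus absorbing the growth factor $(1+t)^{(N-1)/N}$ carried by $\omega$.
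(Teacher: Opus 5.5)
Your plan fails at the reduction in your first paragraph, and the later steps leave the central estimate unproved.

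\textbf{The reduction to the $\widetilde R$-remainder is false.} Bounding the $R$-deficit below by the $\widetilde R$-remainder throws away the term $\big(\frac{N-1}{N}\big)^{N}\int_\Omega |u|^N|x|^{-N}\big[(\log\frac{\widetilde R}{|x|})^{-N}-(\log\frac{R}{|x|})^{-N}\big]\,dx$. The inequality you then reduce to, $c_N\int_\Omega\omega_{\widetilde R}^N|\nabla v|^N\,dx\ge\|u\|^N_{\operatorname{Exp}L^{\frac{N}{N-1}}(\Omega)}\,\mathcal{D}_{N,R,C}(u)^N$, fails for every $C>0$. The weight $\omega_{\widetilde R}^N=(\log\frac{\widetilde R}{|x|})^{N-1}$ vanishes on $\{|x|=\widetilde R\}$, so it is not comparable to $(1+t)^{N-1}$ on the outermost annulus. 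Hence $v$ can rise from $0$ to $1$ in a boundary layer at arbitrarily small weighted cost.

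Concretely, let $\Omega=B_1$ and $t=\log\frac1{|x|}$. Let $v_M$ be radial and equal to $0$ for $t\le e^{-M}$, to $1+\frac{\log t}{M}$ on $[e^{-M},1]$, to $1$ on $[1,2]$, to $1-\frac1M\log\frac t2$ on $[2,2e^M]$, and to $0$ for $t\ge 2e^M$. Set $u_M=\omega_1v_M\in W^{1,N}_0(B_1)$. Then
\[
\int_{B_1}\omega_1^N|\nabla v_M|^N\,dx=|\mathbb{S}^{N-1}|\int_0^\infty t^{N-1}|v_M'(t)|^N\,dt=2|\mathbb{S}^{N-1}|M^{1-N}\longrightarrow0 .
\]
But $0\le u_M\le\omega_1$, and $u_M=\omega_1$ on $E=\{e^{-2}\le|x|\le e^{-1}\}$. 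So $\|\omega_1\chi_E\|\le\|u_M\|\le\|\omega_1\|$ in $\operatorname{Exp}L^{\frac{N}{N-1}}(B_1)$, and
\[
\mathcal{D}_{N,R,C}(u_M)\ge\inf_{a\in\mathbb{R}}\int_E\Big[\exp\Big(\frac{C|\omega_1-a\,\omega_R|^{\frac{N}{N-1}}}{\|\omega_1\|^{\frac{N}{N-1}}_{\operatorname{Exp}L^{\frac{N}{N-1}}(B_1)}}\Big)-1\Big]dx>0
\]
uniformly in $M$. This holds because $\omega_1/\omega_R=\big(t/(t+\log R)\big)^{\frac{N-1}{N}}$ is strictly increasing on $E$, and the integral is continuous and coercive in $a$. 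The true deficit $\mathcal{H}_{N,R}(u_M)$ tends to $+\infty$, entirely through the discarded term. The paper avoids this by making the ground-state substitution with $\omega_R$ itself, $R>\widetilde R$ (Lemma~\ref{Lemma: Critical Hardy remainder}). Then $\omega_R^N\ge(\log\frac{R}{\widetilde R})^{N-1}>0$ on $\Omega$, and this lower bound is exactly what the paper uses on the outermost annulus $A_{-1}$.

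\textbf{The remaining steps do not close even with $\omega_R$.} Choosing $a=\lim_{k\to\infty}v_k$ anchors the telescoping at the origin, where nothing is controlled. For $u\in C_c^\infty(\Omega)$ this limit is just $0$. With $\epsilon_j\lesssim j^{-\frac{N-1}{N}}\eta_j$, where $\eta_j^N$ is the weighted energy on the $j$-th annulus, H\"older only yields $\sum_{j\ge k}\epsilon_j\lesssim\big(\sum_{j\ge k}j^{-1}\big)^{\frac{N-1}{N}}\big(\sum_j\eta_j^N\big)^{\frac1N}=\infty$. The anchor must sit where $v$ is known to be small: the boundary, or, as in the paper, an annulus $A_{\ell_0}$ on which $v\le\mathcal{H}_{N,R}(u)^{1/N}$. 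Your fallback ($a=0$ with $\mathcal{D}$ bounded by a constant) gives no decay in the deficit. The linear dependence on $(\text{remainder})^{1/N}$, which you rightly flag as the main obstacle, is only announced. The bound $e^s-1\le s\,e^{s_0}/s_0$ needs $s\le s_0$, and that fails near the origin.

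In the paper, linearity comes from a case split. If $\mathcal{H}_{N,R}(u)>1$, one normalizes $\|u\|_{\operatorname{Exp}L^{\frac{N}{N-1}}(\Omega)}\le1$ and takes $a=0$. If $\mathcal{H}_{N,R}(u)\le1$, one takes $a=\mathcal{H}_{N,R}(u)^{1/N}$. On $\mathcal{G}^c$ the paper uses $0\le u\le a\,\omega_R$ together with $\Phi(st)\le s\Phi(t)$ for $s\le1$. On $\mathcal{G}$ it sums moment bounds $C^qq^{\frac{N-1}{N}q}\mathcal{H}_{N,R}(u)^{q/N}\le C^qq^{\frac{N-1}{N}q}\mathcal{H}_{N,R}(u)^{1/N}$. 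Be aware that the paper's route to those moment bounds goes through \eqref{ineq7}, which uses $\big|\sum_{k=\ell_0}^{\ell-1}d_k\big|^q\le2^q\sum_k|d_k|^q$. That inequality already fails for three equal increments and $q\ge3$, so the annular summation needs more care than it receives there as well.
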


The above theorem establishes a quantitative stability estimate for the
critical Hardy inequality and improves the quantitative stability result of
Cianchi and Ferone \cite[Theorem $1.2$]{CianchiFerone} in several directions.
First, the exponent in the distance function is improved from $N^{2}$ to $N$.
Second, the distance function is expressed in terms of the Luxemburg norm of the
Orlicz space $\operatorname{Exp}L^{\frac{N}{N-1}}(\Omega)$, rather than the norm
of $L^{\infty,N}(\log L)^{-1}(\Omega)$. This is a genuine improvement since the continuous embedding
\begin{equation*}
    L^{\infty,N}(\log L)^{-1}(\Omega)
\subsetneq 
\operatorname{Exp}L^{\frac{N}{N-1}}(\Omega),
\end{equation*}
and the corresponding norm appears in the denominator of the distance
function. Finally, in \cite[Theorem $1.2$]{CianchiFerone}, the virtual extremizers are truncated by subtracting a sufficiently large positive constant and taking the positive part, so that the resulting functions have support contained in $\Omega$. In contrast, Theorem
\ref{Theorem: Quantitative stability critical Hardy} establishes the
quantitative stability estimate directly in terms of the virtual extremizer
$\omega_{R}$, defined in \eqref{Defn: Omega}, without introducing any cut-off
function.

We also obtain the following improved quantitative stability estimate for the critical Hardy inequality with the logarithmic weight considered in \eqref{Cianchi Quantitative} as established in \cite[Theorem~$1.2$]{CianchiFerone}. Compared to \eqref{Cianchi Quantitative}, our result has two improvements: the exponent of the distance term is reduced from $N^{2}$ to $N$, and the distance is measured directly from the virtual extremizer $\omega_{R}$, without introducing any truncation. Moreover, while \eqref{Cianchi Quantitative} is established for $R>\widetilde{R}$, our result holds for every $R\geq\widetilde{R}$.  Let $u \in W^{1,N}_{0}(\Omega)$ and $R \geq \widetilde{R} = \sup_{\Omega} |x|$. Extending $u$ by zero on $B_{\widetilde{R}}(0) \setminus \Omega$, assume $u \in W^{1,N}_{0}(B_{\widetilde{R}}(0))$. Using the decreasing function $\frac{1}{|x|^{N} \left( 1+ \log R/|x| \right)^{N}}$ and the Hardy--Littlewood inequality (see \cite[Inequality $(3.1)$]{CianchiFerone}), we have
\begin{equation}\label{Hardy Littlewood inequality}
\bigintsss_{B_{\widetilde{R}}(0)}
\frac{|u(x)|^{N}}
{|x|^{N} \left( 1+ \log \frac{R}{|x|} \right)^{N}}
\,dx \leq 
\bigintsss_{B_{\widetilde{R}}(0)}
\frac{|u^{\#}(x)|^{N}}
{|x|^{N} \left( 1+ \log \frac{R}{|x|} \right)^{N}}
\,dx=: \| u\|^{N}_{\mathcal{L}^{N, R}(\Omega)},
\end{equation}
where $u^{\#}$ is the Schwarz symmetrization of $u$ defined by
$u^{\#}(x) = u^{*}(|B_{1}| |x|^{N})$, where $u^{*}$ is the decreasing rearrangement of $u$, defined in \eqref{decreasing rearrangement}, and $|B_{1}|$ is the Lebesgue measure of a unit ball in $\mathbb{R}^{N}$. For $C>0$, define
\begin{equation*}
\widetilde{\mathcal{D}}_{N,R,C}(u)
    :=
    \inf_{a\in\mathbb{R}}
    \bigintsss_{\Omega}
    \left[
    \exp\left(
    \frac{
    C|u(x)- a \, \widetilde{\omega}_{R}(x)|^{\frac{N}{N-1}}
    }
    {
    \| u\|^{\frac{N}{N-1}}_{\mathcal{L}^{N, R}(\Omega)}
    }
    \right)
    -1
    \right]dx,
\end{equation*}
where
\begin{equation}\label{Defn: Omege widetilde}
    \widetilde{\omega}_{R}(x)
    :=
    \left( 1+
    \log\frac{R}{|x|}
    \right)^{\frac{N-1}{N}}.
\end{equation}
Then the following theorem holds.

\begin{theorem}\label{Theorem 2}
Let $\Omega$ be a bounded open set containing the origin in $\mathbb{R}^{N}$, where $N \geq 2$, and let $\widetilde{R} = \sup_{x \in \Omega} |x|$ and $R \geq \widetilde{R}$. Then there exists a constant
$C=C(N,R,\widetilde{R})>0$ such that
\begin{align*}
\int_{\Omega} |\nabla u(x)|^{N}\,dx
-
\left(\frac{N-1}{N}\right)^{N} &
\bigintsss_{\Omega}
\frac{|u(x)|^{N}}
{|x|^{N} \left( 1+ \log \frac{R}{|x|} \right)^{N}}
\,dx \\ &
\geq
\left(\frac{N-1}{N}\right)^{N} \left(
\bigintsss_{\Omega}
\frac{|u(x)|^{N}}
{|x|^{N} \left( 1+ \log \frac{R}{|x|} \right)^{N}}
\,dx \right)  \bigl(\widetilde{\mathcal{D}}_{N,R,C}(u)\bigr)^{N},
\end{align*}
for every $u\in W^{1,N}_{0}(\Omega)$.
\end{theorem}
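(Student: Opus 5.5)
Reduce Theorem~\ref{Theorem 2} to Theorem~\ref{Theorem: Quantitative stability critical Hardy}, applied on a larger ball after a change of radius. The key observation is that
\[
\widetilde{\omega}_{R}(x)=\Bigl(1+\log\tfrac{R}{|x|}\Bigr)^{\frac{N-1}{N}}=\Bigl(\log\tfrac{eR}{|x|}\Bigr)^{\frac{N-1}{N}}=\omega_{eR}(x),
\]
and similarly
\[
|x|^{N}\Bigl(1+\log\tfrac{R}{|x|}\Bigr)^{N}=|x|^{N}\Bigl(\log\tfrac{eR}{|x|}\Bigr)^{N}.
\]
Since $eR>R\ge\widetilde R$, the strict inequality required in Theorem~\ref{Theorem: Quantitative stability critical Hardy} holds with $R$ replaced by $eR$. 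This explains why Theorem~\ref{Theorem 2} holds for all $R\ge\widetilde R$.

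Applying Theorem~\ref{Theorem: Quantitative stability critical Hardy} with $eR$ gives
\[
\int_\Omega|\nabla u|^N-\Bigl(\tfrac{N-1}{N}\Bigr)^N\int_\Omega\frac{|u|^N}{|x|^N(1+\log\frac{R}{|x|})^N}\,dx\ \ge\ \|u\|^N_{\operatorname{Exp}L^{\frac{N}{N-1}}(\Omega)}\bigl(\mathcal D_{N,eR,C_0}(u)\bigr)^N,
\]
with $\mathcal D_{N,eR,C_0}$ measured from $a\,\omega_{eR}=a\,\widetilde\omega_R$. It remains to compare normalizations. By \eqref{Hardy Littlewood inequality}, the weighted integral on the right of Theorem~\ref{Theorem 2} is at most $\|u\|^N_{\mathcal L^{N,R}(\Omega)}$. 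It therefore suffices to prove
\[
\|u\|_{\mathcal L^{N,R}(\Omega)}\le K\,\|u\|_{\operatorname{Exp}L^{\frac{N}{N-1}}(\Omega)}
\]
for some $K=K(N,R,\widetilde R)$.

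This embedding is the main obstacle. One cannot argue directly on $\Omega$ via the Hardy inequality (that would bound the quantity by the gradient, which is the wrong direction). Instead, pass to rearrangements. Write $\|u\|_{\mathcal L^{N,R}}^N$ as a one-dimensional integral in $s=|B_1||x|^N$: up to constants it is $\int_0^{|\Omega|}u^*(s)^N\,(N+\log\frac{|B_1|R^N}{s})^{-N}\,\frac{ds}{s}$, i.e. the Lorentz--Zygmund quantity \eqref{Lorentz Zygmund Norm}. Bound it by the Luxemburg norm as follows.

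Set $\lambda=\|u\|_{\operatorname{Exp}}$. The Luxemburg condition gives $u^*(s)\le\lambda\bigl(\log(1+|\Omega|/s)\bigr)^{\frac{N-1}{N}}$. Substituting this is not enough, since it produces the divergent integral $\int(\log)^{N-1-N}\,ds/s$. So the pointwise bound alone fails, and one must use the integral condition $\int\Phi(u^*/\lambda)\le1$ through a dyadic decomposition $s\in(2^{-k-1}|\Omega|,2^{-k}|\Omega|]$. If this genuinely fails, as the paper's claimed strict embedding $L^{\infty,N}(\log L)^{-1}\subsetneq\operatorname{Exp}L^{\frac N{N-1}}$ suggests, the correct move is the reverse normalization. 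Since the $\mathcal L^{N,R}$ norm is the larger denominator, the needed inequality is instead
\[
\exp\Bigl(\tfrac{C|f|^{\frac N{N-1}}}{\|u\|_{\mathcal L}^{\frac N{N-1}}}\Bigr)\le\exp\Bigl(\tfrac{C_0|f|^{\frac N{N-1}}}{\|u\|_{\operatorname{Exp}}^{\frac N{N-1}}}\Bigr),
\]
which follows from $\|u\|_{\operatorname{Exp}}\le K\|u\|_{\mathcal L^{N,R}}$ with $C=C_0K^{-\frac N{N-1}}$. That direction is exactly the stated embedding, proved via $u^*(s)\le u^*(t)$ for $t<s$ and a Hölder average of $u^*(t)^N\,dt/(t\log^N)$ over $(0,s)$. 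This yields $u^*(s)\lesssim\|u\|_{\mathcal L}\log(\cdot/s)^{\frac{N-1}N}$ with a small constant.

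This makes $\widetilde{\mathcal D}_{N,R,C}(u)\le\mathcal D_{N,eR,C_0}(u)$. For the prefactor, Jensen/convexity gives $\mathcal D\le$ a bounded quantity, so we also need to exchange the prefactor $\|u\|^N_{\operatorname{Exp}}$ for $(\frac{N-1}N)^N\int\frac{|u|^N}{|x|^N(1+\log)^N}$. Here I plan to use the reverse bound $\int\frac{|u|^N}{|x|^N(1+\log)^N}\le\|u\|^N_{\mathcal L}$ together with $\|u\|_{\mathcal L}\lesssim\|u\|_{\operatorname{Exp}}$ only in a weak form. Failing that, reprove Theorem~\ref{Theorem: Quantitative stability critical Hardy}'s dyadic argument directly with the $\mathcal L^{N,R}$ normalization, which I expect to be the honest route and the real work.
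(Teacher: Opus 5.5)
\textbf{Gap: the prefactor exchange.} The first half of your reduction is sound. You note $\widetilde\omega_R=\omega_{eR}$ and $\widetilde{\mathcal H}_{N,R}=\mathcal H_{N,eR}$ with $eR>\widetilde R$, so Theorem~\ref{Theorem: Quantitative stability critical Hardy} gives $\widetilde{\mathcal H}_{N,R}(u)\ge\|u\|^N_{\operatorname{Exp}L^{\frac N{N-1}}(\Omega)}\,\mathcal D_{N,eR,C_0}(u)^N$ for all $u\in W^{1,N}_0(\Omega)$. The embedding $\|u\|_{\operatorname{Exp}L^{\frac N{N-1}}(\Omega)}\le K\|u\|_{\mathcal L^{N,R}(\Omega)}$ is the direction you finally settle on; it is \eqref{ineq11} in the paper, and your sketch via $u^*(t)\ge u^*(s)$ for $t<s$ is the right proof. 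It does give $\widetilde{\mathcal D}_{N,R,C}(u)\le\mathcal D_{N,eR,C_0}(u)$.

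The gap is the prefactor. After Hardy--Littlewood you still need $\|u\|^N_{\mathcal L^{N,R}(\Omega)}\,\widetilde{\mathcal D}_{N,R,C}(u)^N\lesssim\|u\|^N_{\operatorname{Exp}L^{\frac N{N-1}}(\Omega)}\,\mathcal D_{N,eR,C_0}(u)^N$. The tool you propose, $\|u\|_{\mathcal L^{N,R}}\lesssim\|u\|_{\operatorname{Exp}}$ ``in a weak form'', is false on $W^{1,N}_0(\Omega)$. The truncations $\min\{k,(\omega_{eR}-Q)_+\}$ with $Q$ large have bounded Luxemburg norm, while their $\mathcal L^{N,R}$ norms tend to infinity as $k\to\infty$. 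So the bare comparison $\widetilde{\mathcal D}\le\mathcal D$ cannot absorb the unbounded ratio $\|u\|_{\mathcal L^{N,R}}/\|u\|_{\operatorname{Exp}}$. Your fallback of rerunning the dyadic argument is left unspecified.

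\textbf{The missing idea: sublinearity of $\Phi$.} Since $\Phi$ is convex with $\Phi(0)=0$, it satisfies $\Phi(st)\le s\,\Phi(t)$ for $0\le s\le1$. With $f=u-a\widetilde\omega_R$, the two integrands are $\Phi\bigl(C^{\frac{N-1}N}|f|/\|u\|_{\mathcal L^{N,R}}\bigr)$ and $\Phi\bigl(C_0^{\frac{N-1}N}|f|/\|u\|_{\operatorname{Exp}}\bigr)$. Take $s=(C/C_0)^{\frac{N-1}N}\|u\|_{\operatorname{Exp}}/\|u\|_{\mathcal L^{N,R}}$, which is at most $1$ once $C\le C_0\min\{1,K^{-\frac N{N-1}}\}$. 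Then $\widetilde{\mathcal D}_{N,R,C}(u)\le s\,\mathcal D_{N,eR,C_0}(u)$, that is, $\|u\|_{\mathcal L^{N,R}}\widetilde{\mathcal D}_{N,R,C}(u)\le\|u\|_{\operatorname{Exp}}\mathcal D_{N,eR,C_0}(u)$. The large factor in the prefactor is paid for by a gain inside the distance. Combined with $(\frac{N-1}N)^N\int_\Omega\frac{|u|^N}{|x|^N(1+\log\frac R{|x|})^N}\,dx\le\|u\|^N_{\mathcal L^{N,R}(\Omega)}$, this closes your black-box reduction for every $u\in W^{1,N}_0(\Omega)$ at once, and is shorter than the paper's route.

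\textbf{How the paper does it.} It normalizes $\|u\|_{\mathcal L^{N,R}(\Omega)}=\frac N{N-1}$ and reuses only the $\mathcal H\le1$ part of the proof of Theorem~\ref{Theorem: Quantitative stability critical Hardy} with $Re$, which needs no normalization. It handles $\widetilde{\mathcal H}_{N,R}(u)>1$ via \eqref{ineq11} and then rescales, so the same norm appears in prefactor and denominator automatically. It then redoes the $u_\pm$ splitting, using exactly this monotonicity of $t\mapsto\Phi(st)/t$, and the density step. Neither route requires rerunning the dyadic argument.
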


Our approach is completely rearrangement-free and differs substantially from the method of Cianchi and Ferone \cite{CianchiFerone}. Their proof relies heavily on Hardy--Littlewood inequalities and sophisticated rearrangement arguments. In contrast, our proof is based on a scale-invariant Sobolev inequality, a Hardy inequality with a remainder term, and a careful summation argument over a dyadic decomposition of the domain. These techniques allow us to avoid any use of rearrangement theory while yielding a sharper quantitative stability estimate. 

\begin{remark}
The exponent $N$ naturally arises from the dyadic summation argument in the proof and improves the previously known exponent $N^{2}$. Whether the exponent $N$ is optimal remains an interesting open problem.
\end{remark}

Recent years have witnessed significant developments in Hardy inequalities, critical Hardy inequalities, logarithmic Hardy inequalities, and their refinements; see, for instance, \cite{Adi2002, Adimurthi2005, Adimurthi2009, Adi2025, Adi2006, Barbatis2003, Barbatis2004, Brezis1997-2, Brezis2000, Brezis1997, Cianchi2009-2, delPino2010, Edmunds1999, Filippas2002, Gazzola2004, Leray1933, Opic1990, Sano2017,  Sano2022} and the references therein. For quantitative stability results in various Sobolev and Hardy-Sobolev inequalities, we refer to \cite{Debdip2026, Bianchi1991, Cianchi2006, Cianchi2009, Dolbeault2025, Figalli2019, Figalli2022,  Fusco2007, Fusco2008, Neumayer2020} and the references therein. The present work contributes to this line of research by establishing an improved quantitative stability estimate for the critical Hardy inequality using a completely rearrangement-free approach.

\smallskip

The structure of the paper is as follows:
\begin{itemize}
    \item[Section \ref{Section 1}:] Introduction, an overview of the critical Hardy inequality and its quantitative stability, together with the statement of the main theorems.

    \item[Section \ref{Section 2}:] Preliminaries on Orlicz spaces, the Luxemburg norm, and the integrability properties of the virtual extremizer. We also establish a critical Hardy inequality with a remainder term.

    \item[Section \ref{Section 3}:] Scale-invariant Sobolev inequalities with explicit dependence on the constants. We also recall an estimate relating the averages over two disjoint sets.

    \item[Section \ref{Section 4}:] Proof of the main theorems. Our proof is entirely rearrangement-free and is based on the scale-invariant Sobolev inequality, the remainder estimate for the critical Hardy inequality, and a careful summation argument over a dyadic decomposition.
\end{itemize}

\section{The Luxemburg norm, the virtual extremizer, and a remainder estimate}\label{Section 2}

In this section, we recall the Orlicz space and the associated Luxemburg norm used throughout the paper. We then establish several preliminary results concerning the virtual extremizer associated with the critical Hardy inequality. In particular, we prove its integrability properties, compute its Luxemburg norm explicitly, and establish a critical Hardy inequality with a remainder term, which will play a key role in the proof of our main theorems.

\smallskip

Let $\Omega \subset \mathbb{R}^{N}$ be a measurable set and let
$\Phi : [0,\infty) \rightarrow [0,\infty)$ be an $N$-function, that is,
$\Phi$ is continuous, convex, increasing,
$\Phi(0)=0$, and satisfies
\begin{equation*}
\lim_{t\to0^{+}}\frac{\Phi(t)}{t}=0,
\qquad
\lim_{t\to\infty}\frac{\Phi(t)}{t}=\infty.
\end{equation*}

The corresponding Orlicz space is defined by
\begin{equation*}
L^{\Phi}(\Omega)
:=
\left\{
u:\Omega\rightarrow\mathbb{R}\text{ measurable }:
\int_{\Omega}
\Phi\!\left(\frac{|u(x)|}{\lambda}\right)\,dx
<\infty
\text{ for some }\lambda>0
\right\}.
\end{equation*}
The Luxemburg norm on $L^{\Phi}(\Omega)$ is defined by
\begin{equation*}
\|u\|_{L^{\Phi}(\Omega)}
=
\inf
\left\{
\lambda>0:
\int_{\Omega}
\Phi\!\left(\frac{|u(x)|}{\lambda}\right)\,dx
\le1
\right\}.
\end{equation*}
It is well known that the Luxemburg norm is indeed a norm on
$L^{\Phi}(\Omega)$ and that $(L^{\Phi}(\Omega),\|\cdot\|_{L^{\Phi}(\Omega)})$
is a Banach space.

Let $\Phi(t) = \exp(t^{\frac{N}{N-1}}) -1$ for all $t \geq 0$. For simplicity, we denote $ L^{\Phi}(\Omega) = \operatorname{Exp} L^{\frac{N}{N-1}}(\Omega)$ and write the corresponding Luxemburg norm as
\begin{equation}\label{Defn: Exp Norm}
\|u\|_{L^{\Phi}(\Omega)}   =  \|u\|_{\operatorname{Exp} L^{\frac{N}{N-1}}(\Omega)}.
\end{equation}

\subsection{Integrability properties of the virtual extremizer}

Let $B_{1} = B_{1}(0)$ denote the unit ball in $\mathbb{R}^{N}$, where $N\geq 2$, consider the virtual extremizer associated with the critical Hardy inequality, given in \eqref{Defn: Omega}. Although $\omega_{R}$ is unbounded near the origin, it belongs to every
finite Lebesgue space. Moreover, it belongs to the critical exponential
Orlicz space. The following lemma gives the integrability properties of the virtual extremizer.

\begin{lemma}\label{Lemma: On virtual Extremizers}
Let $N\geq 2$, and $B_{1}= B_{1}(0)$ be a unit ball in $\mathbb{R}^{N}$. Let $\omega=\omega_{1}$ be defined in \eqref{Defn: Omega}. Then the following assertions hold.
\begin{enumerate}
    \item For every $q\in[1,\infty)$, $\omega\in L^{q}(B_{1}),$
    and
    \begin{equation*}
        \|\omega\|_{L^{q}(B_{1})}
        =
        \frac{|B_{1}|^{\frac{1}{q}}}{N^{\frac{N-1}{N}}}
        \left[
        \Gamma\left(
        1+\frac{N-1}{N}q
        \right)
        \right]^{\frac{1}{q}}.
    \end{equation*}

    \item Let  $ \Phi(t):= \exp\left(t^{\frac{N}{N-1}}\right)-1$, for  $t\geq 0$. Then $\omega\in
        \operatorname{Exp} L^{\frac{N}{N-1}}(B_{1})$, and its Luxemburg norm is given by
    \begin{equation*}
  \|\omega\|_{\operatorname{Exp} L^{\frac{N}{N-1}}(B_{1})}
        =
        \left(
        \frac{1+|B_{1}|}{N}
        \right)^{\frac{N-1}{N}}.
    \end{equation*}
\end{enumerate}
\end{lemma}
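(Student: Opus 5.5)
Both parts reduce to one-dimensional integrals in polar coordinates followed by the substitution $t=\log(1/r)$. Here $\omega(x)=(\log\frac{1}{|x|})^{\frac{N-1}{N}}$ on $B_1$ and $dx=N|B_1|r^{N-1}\,dr$.

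\emph{Part (1).} We compute
\begin{equation*}
\int_{B_1}\omega^q\,dx = N|B_1|\int_0^1 \Bigl(\log\tfrac1r\Bigr)^{\frac{N-1}{N}q} r^{N-1}\,dr .
\end{equation*}
Setting $r=e^{-t}$ gives $N|B_1|\int_0^\infty t^{\frac{N-1}{N}q}e^{-Nt}\,dt$, and then setting $s=Nt$ gives
\begin{equation*}
|B_1|\,N^{-\frac{N-1}{N}q}\,\Gamma\Bigl(1+\tfrac{N-1}{N}q\Bigr).
\end{equation*}
Taking the $q$-th root yields the stated formula, and finiteness of the Gamma function gives $\omega\in L^q(B_1)$.

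\emph{Part (2).} For $\lambda>0$ set $\mu=\lambda^{-\frac{N}{N-1}}$, so that $\Phi(\omega/\lambda)=e^{\mu\log(1/|x|)}-1=|x|^{-\mu}-1$. Then
\begin{equation*}
\int_{B_1}\Phi(\omega/\lambda)\,dx = N|B_1|\int_0^1 (r^{N-1-\mu}-r^{N-1})\,dr .
\end{equation*}
This is finite exactly when $\mu<N$, in which case it equals
\begin{equation*}
N|B_1|\Bigl(\frac1{N-\mu}-\frac1N\Bigr)=\frac{|B_1|\,\mu}{N-\mu},
\end{equation*}
and it is $+\infty$ when $\mu\ge N$. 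Finiteness for some $\lambda$ already shows $\omega\in\operatorname{Exp}L^{\frac{N}{N-1}}(B_1)$.

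The Luxemburg norm is the infimum of those $\lambda$ with $\frac{|B_1|\mu}{N-\mu}\le1$, that is, $\mu\le\frac{N}{1+|B_1|}$. The map $\mu\mapsto\frac{|B_1|\mu}{N-\mu}$ is increasing on $[0,N)$ and $\mu$ is decreasing in $\lambda$, so the admissible set of $\lambda$ is an interval $[\lambda_0,\infty)$. Its endpoint is given by $\lambda_0^{-\frac{N}{N-1}}=\frac{N}{1+|B_1|}$, i.e.
\begin{equation*}
\lambda_0=\Bigl(\frac{1+|B_1|}{N}\Bigr)^{\frac{N-1}{N}},
\end{equation*}
which is the claimed value.

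The only point needing care is the threshold $\mu<N$. For $\mu\ge N$ the modular diverges, so those $\lambda$ are excluded and cause no problem. We should also note that $\mu=\frac{N}{1+|B_1|}<N$, so the endpoint lies in the finite regime and the infimum is attained there.
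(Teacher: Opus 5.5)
Your proposal is correct and follows essentially the same route as the paper: polar coordinates with $t=\log(1/r)$ and the Gamma integral for part (1), and for part (2) the explicit modular $\frac{|B_1|\mu}{N-\mu}$ with $\mu=\lambda^{-\frac{N}{N-1}}$, which is at most $1$ exactly when $\lambda \ge \bigl(\frac{1+|B_1|}{N}\bigr)^{\frac{N-1}{N}}$. You also treat explicitly the regime $\mu\ge N$, where the modular diverges, and the monotonicity that makes the admissible set of $\lambda$ an interval; the paper leaves both points implicit.
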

\begin{proof}
Let $q\in[1,\infty)$. Using polar coordinates, we obtain
\begin{align*}
    \int_{B_{1}}|\omega(x)|^{q}\,dx
    &=
    |\mathbb{S}^{N-1}|
    \int_{0}^{1}
    \left(
    \log\left(\frac{1}{r}\right)
    \right)^{\frac{N-1}{N}q}
    r^{N-1}\,dr.
\end{align*}
We make the change of variables $t=\log\left(\frac{1}{r}\right)$. Then $r=e^{-t}$,  $dr=-e^{-t}\,dt$,
and hence $r^{N-1}\,dr=-e^{-Nt}\,dt$. Therefore,
\begin{align*}
    \int_{B_{1}}|\omega(x)|^{q}\,dx
    &=
    |\mathbb{S}^{N-1}|
    \int_{0}^{\infty}
    t^{\frac{N-1}{N}q}e^{-Nt}\,dt.
\end{align*}
Using the Gamma function identity $\int_{0}^{\infty}t^{\alpha}e^{-Nt}\,dt   = \frac{\Gamma(\alpha+1)} {N^{\alpha+1}}$, for $\alpha>-1$, with $\alpha=\frac{N-1}{N}q$, we obtain
\begin{align*}
\int_{B_{1}}|\omega(x)|^{q}\,dx
    =
    \frac{|\mathbb{S}^{N-1}|}
    {N^{1+\frac{N-1}{N}q}}
    \Gamma\left(
    1+\frac{N-1}{N}q
    \right).
\end{align*}
Hence, $\omega\in L^{q}(B_{1})$ for every $q\in[1,\infty)$.

We next prove the exponential integrability of $\omega$. By the
definition of the Luxemburg norm,
\begin{align*}
    \|\omega\|_{\operatorname{Exp} L^{\frac{N}{N-1}}(B_{1})}
    =
    \inf\Bigg\{
    \lambda>0:
    \int_{B_{1}}
    \left[
    \exp\left(
    \left(
    \frac{\omega(x)}{\lambda}
    \right)^{\frac{N}{N-1}}
    \right)-1
    \right]dx
    \leq 1
    \Bigg\}.
\end{align*}
Since $\omega(x)^{\frac{N}{N-1}} = \log 1/|x|$, we have
\begin{align*}
    \int_{B_{1}}
    \left[
    \exp\left(
    \left(
    \frac{\omega(x)}{\lambda}
    \right)^{\frac{N}{N-1}}
    \right)-1
    \right]dx
    &=
    \int_{B_{1}}
    \left[
    \exp\left(
    \lambda^{-\frac{N}{N-1}}
    \log\left(\frac{1}{|x|}\right)
    \right)-1
    \right]dx
    \\
    & =
    \int_{B_{1}}
    \left(
    |x|^{-\lambda^{-\frac{N}{N-1}}}-1
    \right)dx =  \frac{
    |B_{1}|\lambda^{-\frac{N}{N-1}}
    }{
    N-\lambda^{-\frac{N}{N-1}}
    }.
\end{align*}
Thus, the condition in the definition of the Luxemburg norm is equivalent to $ \lambda \geq \left( \frac{1+|B_{1}|}{N} \right)^{\frac{N-1}{N}}$. Therefore,
\begin{equation*}
\|\omega\|_{\operatorname{Exp} L^{\frac{N}{N-1}}(B_{1})}
    =
    \left(
    \frac{1+|B_{1}|}{N}
    \right)^{\frac{N-1}{N}}.
\end{equation*}
In particular, $\omega\in \operatorname{Exp} L^{\frac{N}{N-1}}(B_{1})$. This completes the proof.
\end{proof}

The next lemma establishes a critical Hardy inequality with a remainder term, which serves as a key ingredient in the proof of our main theorems. The proof relies on the fact that the virtual extremizer $\omega_{R}(x)
= \left( \log\frac{R}{|x|}
\right)^{\frac{N-1}{N}}$ satisfies the Euler--Lagrange equation associated with the critical Hardy inequality,
\begin{equation}\label{Euler-Lagrange Equation}
-\operatorname{div}\!\left(|\nabla\omega_{R}|^{N-2}\nabla\omega_{R}\right)
=
\left(\frac{N-1}{N}\right)^{N}
\frac{(\omega_{R}(x))^{N-1}}
{|x|^{N}\left(\log\frac{R}{|x|}\right)^{N}}.
\end{equation}
Combining this identity with the following convexity estimate, valid for all
$X,Y\in\mathbb{R}^{N}$ with $N\geq2$ (see \cite[Inequality $(2.13)$]{Frank2008}),
\begin{equation}\label{Convex Estimate}
|X+Y|^{N}
\geq
|X|^{N}
+
N|X|^{N-2}X\cdot Y
+
c_{N}|Y|^{N},
\end{equation}
where $c_{N}:= \min_{0 < \tau<1/2} \left( (1-\tau)^{N}  - \tau^{N}  + N \tau^{N-1} \right) $, we obtain the following remainder estimate.

\begin{lemma}\label{Lemma: Critical Hardy remainder}
Let $\Omega$ be a bounded domain containing origin in $\mathbb{R}^{N}$, where $N \geq 2$, and let $\widetilde{R} = \sup_{x \in \Omega}$ and $R \geq \widetilde{R}$. Then for any $u \in W^{1,N}_{0}(\Omega)$,
\begin{align}\label{Hardy inequality with remainder p=N} \int_{\Omega} |\nabla u(x)|^{N} \, dx - & \left( \frac{N-1}{N} \right)^{N} \bigintsss_{\Omega} \frac{|u(x)|^{N}}{|x|^{N} \left( \log\frac{R}{|x|} \right)^{N}} \, dx \nonumber \geq c_{N} \int_{\Omega} |\nabla v(x)|^{N} \left( \log \frac{R}{|x|} \right)^{N-1} \, dx , 
\end{align}
where $v(x) = u(x) \omega^{-1}_{R}(x)$. \end{lemma}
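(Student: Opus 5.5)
Our plan is the standard ground-state substitution. By density, it suffices to take $u \in C_c^\infty(\Omega)$ and pass to the limit at the end. Near the origin $\omega_R$ is positive and smooth away from $0$. If $R>\widetilde R$, then $\omega_R$ is bounded below on $\Omega$ by $(\log(R/\widetilde R))^{(N-1)/N}>0$. If $R=\widetilde R$, it may vanish only on $\partial\Omega \cap \{|x|=\widetilde R\}$, which the compact support of $u$ avoids. Hence $v = u\,\omega_R^{-1}$ is Lipschitz with compact support in $\Omega\setminus\{0\}$, except for a neighborhood of the origin. That neighborhood we handle by first working on $\Omega\setminus \overline{B_\varepsilon}$ (or by assuming $u$ vanishes near $0$, again by density, since points have zero $N$-capacity only up to logarithmic corrections; we expect the cleanest route to be cutting off on $B_\varepsilon$ and letting $\varepsilon\to 0$).

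First, write $\nabla u = v\nabla\omega_R + \omega_R\nabla v$, and apply the convexity estimate \eqref{Convex Estimate} with $X = v\nabla\omega_R$ and $Y=\omega_R\nabla v$. This gives
\begin{equation*}
|\nabla u|^N \ge |v|^N|\nabla\omega_R|^N + N|v|^{N-2}v\,\omega_R|\nabla\omega_R|^{N-2}\nabla\omega_R\cdot\nabla v + c_N\,\omega_R^N|\nabla v|^N.
\end{equation*}
Since $\omega_R^N = (\log R/|x|)^{N-1}$, the last term integrates to exactly the claimed remainder. The middle term equals $|\nabla\omega_R|^{N-2}\nabla\omega_R\cdot \nabla(|v|^N)\,\omega_R$. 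We combine it with the first term through the identity
\begin{equation*}
|\nabla\omega_R|^{N-2}\nabla\omega_R\cdot\nabla(|v|^N\omega_R) = \omega_R|\nabla\omega_R|^{N-2}\nabla\omega_R\cdot\nabla|v|^N + |v|^N|\nabla\omega_R|^N.
\end{equation*}
Thus the first two terms together equal $|\nabla\omega_R|^{N-2}\nabla\omega_R\cdot\nabla(|u|^N\omega_R^{1-N})$.

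Next we integrate by parts against the Euler--Lagrange equation \eqref{Euler-Lagrange Equation}, using $|u|^N\omega_R^{1-N}$ as the test function. This yields
\begin{equation*}
\left(\frac{N-1}{N}\right)^N\int_\Omega \frac{|u|^N}{|x|^N(\log R/|x|)^N}\,dx.
\end{equation*}
Rearranging gives \eqref{Hardy inequality with remainder p=N}. The Euler--Lagrange identity itself is a direct radial computation. With $\omega_R=g(r)$, $g=(\log R/r)^{(N-1)/N}$, we have $|g'|^{N-2}g' = -\left(\frac{N-1}{N}\right)^{N-1}r^{1-N}(\log R/r)^{-(N-1)/N\cdot(N-1)}$. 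Applying $-r^{1-N}(r^{N-1}\cdot)'$ to this produces the right-hand side.

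The main obstacle is the boundary term at the origin in the integration by parts, together with the limiting argument. On $\partial B_\varepsilon$, the flux term is bounded by
\begin{equation*}
|\partial B_\varepsilon|\,\sup|u|^N\,\omega_R^{1-N}|\nabla\omega_R|^{N-1} \lesssim \varepsilon^{N-1}\|u\|_\infty^N\,\varepsilon^{1-N}(\log R/\varepsilon)^{-(N-1)^2/N-(N-1)/N},
\end{equation*}
which tends to $0$ as $\varepsilon\to0$ because the logarithm's power is negative. So for smooth $u$ the identity holds on $\Omega\setminus B_\varepsilon$ in the limit. Extending from $C_c^\infty(\Omega)$ to all of $W^{1,N}_0(\Omega)$ then uses Fatou's lemma on the right-hand side and on the Hardy term, and strong convergence of $\nabla u_k$ on the left. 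For the Hardy term, we dominate via \eqref{Critical Hardy}, or simply keep the inequality in the form ``left side $\ge$'' and use Fatou on both nonnegative integrals after moving the Hardy term to the right.
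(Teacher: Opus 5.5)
Your proposal is correct and is essentially the paper's proof. Both use the substitution $u=v\,\omega_R$, the convexity inequality \eqref{Convex Estimate} with $X=v\nabla\omega_R$ and $Y=\omega_R\nabla v$, and a single integration by parts against \eqref{Euler-Lagrange Equation} with the same test function $\omega_R|v|^N=|u|^N\omega_R^{1-N}$. The paper identifies $|X|^N$ pointwise with the Hardy integrand and shows the cross term integrates to zero; you instead show that $|X|^N$ plus the cross term integrates to the Hardy term, which is the same computation.

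Your flux bound on $\partial B_\varepsilon$ (decaying like $(\log R/\varepsilon)^{-(N-1)}$) and your Fatou-based density step make explicit what the paper leaves implicit. The only slip is in your radial formula: $|g'|^{N-2}g'$ should carry $(\log R/r)^{-(N-1)/N}$, not $(\log R/r)^{-(N-1)^2/N}$. Your flux estimate already uses the correct power.
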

\begin{proof}
By density, it is sufficient to prove the result for $u\in C_{c}^{\infty}(\Omega)$. Let $u=v\omega_{R}$, where $\omega_{R}$ is defined in
\eqref{Defn: Omega}. Then
    \begin{equation*}
        \nabla u = v \nabla \omega_{R} + \omega_{R} \nabla v. 
    \end{equation*}
Applying the convexity estimate \eqref{Convex Estimate} with $X=v\nabla\omega_{R}$ and $Y=\omega_{R}\nabla v$, we obtain
    \begin{align*}
   \int_{\Omega} & |\nabla u(x)|^{N} \, dx   =  \int_{\Omega} \left|X+Y \right|^{N} \, dx \\ &\geq \left( \frac{N-1}{N} \right)^{N} \bigintsss_{\Omega} \frac{|u(x)|^{N}}{|x|^{N} \left( \log\frac{R}{|x|} \right)^{N}} \, dx + N \int_{\Omega} |\nabla \omega_{R}|^{N-2} \omega_{R} v |v|^{N-2} \nabla v \cdot \nabla \omega_{R} \, dx \\ & \quad \quad +  c_{N} \int_{\Omega} |\nabla v(x)|^{N} \left( \log \frac{R}{|x|}  \right)^{N-1} \, dx .
\end{align*}
Observe that
\begin{equation*}
N |\nabla \omega_{R}|^{N-2}\omega_{R}v|v|^{N-2}
\nabla v\cdot\nabla\omega_{R}
=
\omega_{R}|\nabla\omega_{R}|^{N-2}
\nabla\omega_{R}\cdot\nabla(|v|^{N}).
\end{equation*}
Hence, integrating by parts, we obtain 
\begin{align*}
   N \int_{\Omega} |\nabla \omega_{R}|^{N-2} \omega_{R} v |v|^{N-2} \nabla v \cdot \nabla \omega_{R} \, dx & =  \int_{\Omega} \omega_{R}|\nabla\omega_{R}|^{N-2}
\nabla\omega_{R}\cdot\nabla(|v|^{N}) \, dx \\ & = - \int_{\Omega} |\nabla \omega_{R}|^{N} |v|^{N} \, dx \\ &  \quad  - \int_{\Omega} \omega_{R} \operatorname{div} \left( |\nabla \omega_{R}|^{N-2} \nabla \omega_{R} \right) |v|^{N} \, dx  . 
\end{align*}
Using the Euler--Lagrange equation \eqref{Euler-Lagrange Equation}, we have $-\omega_{R} \operatorname{div} \left( |\nabla \omega_{R}|^{N-2} \nabla \omega_{R} \right) = |\nabla \omega_{R}|^{N}$. Therefore, the above term vanishes. Hence,
\begin{align*}\int_{\Omega} |\nabla u(x)|^{N} \, dx - & \left( \frac{N-1}{N} \right)^{N} \bigintsss_{\Omega} \frac{|u(x)|^{N}}{|x|^{N} \left( \log\frac{R}{|x|} \right)^{N}} \, dx \nonumber \geq c_{N} \int_{\Omega} |\nabla v(x)|^{N} \left( \log \frac{R}{|x|} \right)^{N-1} \, dx . 
\end{align*}
This completes the proof of lemma.
\end{proof}

\section{Sobolev Inequalities and Auxiliary Estimates}\label{Section 3}

In this section, we establish two preliminary results that are essential for our rearrangement-free approach. We first prove a scale-independent Sobolev
inequality in the critical case $p=N$, where the constant is independent of the scaling parameter and has explicit dependence on the exponent $q$. This estimate plays a key role in the annular decomposition and the summation arguments used later. We also prove an elementary estimate comparing the
averages of a function over two disjoint sets, which will be repeatedly used in the proof of the main theorem.

\smallskip

The next lemma establishes a scale-independent Sobolev inequality in the critical case $p=N$. The key feature of this inequality is that the constant is independent of the scaling parameter $\lambda$, while its dependence on the exponent $q$ is explicitly given by $q^{\frac{N-1}{N}}$. This explicit dependence will play an important role in the summation arguments used later. For any bounded open set $\Omega$, 
\begin{equation*}
(u)_{\Omega} := \frac{1}{|\Omega|} \int_{\Omega} u(x) \, dx  = \fint_{\Omega} u(x) \, dx 
\end{equation*}
denotes the average value of $u$ over $\Omega$, where $|\Omega|$ is the Lebesgue measure of $\Omega$.

\begin{lemma}\label{Lemma: Sobolev inequality}
Let $\Omega$ be a bounded $C^{1}$ domain in $\mathbb{R}^{N}$. For $\lambda>0$, define $\Omega_{\lambda}:=\{\lambda x:x\in\Omega\}$. Then there exists a constant $C=C(N,\Omega)>0$ such that for every $q>N$,
\begin{equation}
    \left(
\fint_{\Omega_{\lambda}}
|u(x)-(u)_{\Omega_{\lambda}}|^{q}\,dx
\right)^{\frac1q}
\leq
C\,q^{\frac{N-1}{N}}
\left(
\int_{\Omega_{\lambda}}
|\nabla u(x)|^{N}\,dx
\right)^{\frac1N},
\end{equation}
for every $u\in W^{1,N}(\Omega_{\lambda})$.
\end{lemma}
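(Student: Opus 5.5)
First reduce to $\lambda=1$ by scaling. Given $u\in W^{1,N}(\Omega_{\lambda})$, set $w(y):=u(\lambda y)$ for $y\in\Omega$. Then $(w)_{\Omega}=(u)_{\Omega_\lambda}$, and by the change of variables $x=\lambda y$ the averaged integral is unchanged:
\begin{equation*}
\fint_{\Omega_\lambda}|u-(u)_{\Omega_\lambda}|^{q}\,dx=\fint_{\Omega}|w-(w)_{\Omega}|^{q}\,dy .
\end{equation*}
Since $\nabla w(y)=\lambda\nabla u(\lambda y)$ and $dx=\lambda^{N}dy$, the $N$-Dirichlet energy is also scale invariant: $\int_{\Omega}|\nabla w|^{N}dy=\int_{\Omega_\lambda}|\nabla u|^{N}dx$. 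This is exactly why $p=N$ matters. It therefore suffices to prove, for $w\in W^{1,N}(\Omega)$ and $q>N$,
\begin{equation*}
\|w-(w)_\Omega\|_{L^q(\Omega)}\le C(N,\Omega)\,q^{\frac{N-1}{N}}\|\nabla w\|_{L^N(\Omega)}.
\end{equation*}
The factor $|\Omega|^{-1/q}$ is harmless because $|\Omega|^{-1/q}\le\max\{1,|\Omega|^{-1}\}$ for $q\ge 1$.

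For the fixed domain there are two routes. The first goes through Trudinger--Moser. Since $\Omega$ is a bounded $C^1$ (hence extension) domain, Poincaré together with Trudinger's inequality gives $\|w-(w)_\Omega\|_{\operatorname{Exp}L^{\frac{N}{N-1}}(\Omega)}\le C\|\nabla w\|_{L^N(\Omega)}$. The standard estimate $\|f\|_{L^q}\le C q^{\frac{N-1}{N}}\|f\|_{\operatorname{Exp}L^{N/(N-1)}}$ then finishes. To prove it, normalize $\|f\|_{\operatorname{Exp}}=1$, so that $\int\big(e^{|f|^{N/(N-1)}}-1\big)\le1$. Expand the exponential and keep the term with index $k=\lceil q(N-1)/N\rceil$, which gives $\int|f|^{kN/(N-1)}\le k!$. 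Hölder and Stirling then yield $\|f\|_q\lesssim (k!)^{(N-1)/(kN)}\lesssim k^{(N-1)/N}\sim q^{(N-1)/N}$.

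The second route is more self-contained and tracks the constant by hand. Use the pointwise Riesz-potential bound $|w(x)-(w)_\Omega|\le C\int_\Omega|\nabla w(y)||x-y|^{1-N}dy$, valid on convex or John domains; a $C^1$ domain is John. Then apply Young's convolution inequality with $\||\cdot|^{1-N}\|_{L^r(B_{d})}$, where $1+\frac1q=\frac1N+\frac1r$. Here $r\to N/(N-1)$ as $q\to\infty$, and $\int_{B_d}|z|^{(1-N)r}dz\sim\big(N-(N-1)r\big)^{-1}$ with $N-(N-1)r\sim N^2/q$. Raising this to the power $1/r$ gives a factor of order $q^{(N-1)/N}$. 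This yields the claimed dependence directly.

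I would present the second route, since it avoids quoting sharp Trudinger-type constants. The main obstacle is justifying the representation formula on a general bounded $C^1$ domain rather than a convex one. If needed, I would instead extend $w-(w)_\Omega$ via a bounded extension operator $E:W^{1,N}(\Omega)\to W^{1,N}_0(B)$ into a large ball $B$, controlling $\|E(w-(w)_\Omega)\|_{W^{1,N}}$ by $\|\nabla w\|_{L^N(\Omega)}$ through Poincaré. On the convex set $B$ the Riesz potential bound holds, and the same Young-inequality computation gives the $q^{\frac{N-1}{N}}$ growth. The constants depend only on $N$ and $\Omega$, and the scaling step makes them independent of $\lambda$.
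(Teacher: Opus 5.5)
Your proposal is correct. Its outer skeleton is the same as the paper's, which performs these steps in the reverse order:
\begin{itemize}
\item the scaling reduction, using the invariance of $\int|\nabla u|^N$;
\item the harmless factor $|\Omega|^{-1/q}$;
\item Poincar\'e, to replace $\|w-(w)_\Omega\|_{W^{1,N}(\Omega)}$ by $\|\nabla w\|_{L^N(\Omega)}$.
\end{itemize}

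The genuine difference is where the growth $q^{\frac{N-1}{N}}$ comes from. The paper takes it as a black box. It cites the whole-space estimate $\|u\|_{L^q(\mathbb{R}^N)}\le Cq^{\frac{N-1}{N}}\bigl(\|u\|_{L^N}^N+\|\nabla u\|_{L^N}^N\bigr)^{1/N}$ from Leoni's book and transfers it to $\Omega$ with a Sobolev extension operator.

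Your second route derives this growth instead. With $\frac1r=\frac{N-1}{N}+\frac1q$ one has exactly $N-(N-1)r=\frac{Nr}{q}$. Hence $\bigl\||z|^{1-N}\chi_{B_d}\bigr\|_{L^r}\lesssim q^{1/r}=q^{\frac{N-1}{N}}q^{1/q}$ with $q^{1/q}\le e^{1/e}$, so the constant is fully explicit. Your fallback (extend into a ball, where the potential bound is elementary) is essentially the paper's extension/Poincar\'e structure, with the citation replaced by this computation. It also avoids John-domain theory, which matters a little because the lemma is later applied to annuli, and annuli are not convex.

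Your first route shows that the lemma is equivalent to Trudinger's embedding $W^{1,N}\hookrightarrow\operatorname{Exp}L^{\frac{N}{N-1}}$. It is the converse of the series-resummation step the paper uses in the main theorem, where $q^{\frac{N-1}{N}}$-growth of $L^q$ norms is turned back into exponential integrability.

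The paper's argument is shorter. Yours is self-contained and tracks the $q$-dependence by hand.
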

\begin{proof}
By \cite[Theorem 12.33]{LeoniBook}, for every $q>N$, there exists a constant
$C=C(N)>0$ such that
\begin{equation*}
    \|u\|_{L^{q}(\mathbb{R}^{N})}
\leq
C\,q^{\frac{N-1}{N}}
\left(
\|u\|_{L^{N}(\mathbb{R}^{N})}^{N}
+
\|\nabla u\|_{L^{N}(\mathbb{R}^{N})}^{N}
\right)^{\frac{1}{N}},
\end{equation*}
for every $u\in W^{1,N}(\mathbb{R}^{N})$. Since $\Omega$ is a bounded $C^{1}$ domain, it is a uniformly bounded Lipschitz domain. Hence, by the Sobolev extension theorem
(\cite[Theorem 13.17]{LeoniBook}), there exists a bounded linear extension operator $E:W^{1,N}(\Omega)\rightarrow W^{1,N}(\mathbb{R}^{N})$, such that $Eu=u$ a.e. in $\Omega$ and
\begin{equation*}
    \|Eu\|_{L^{N}(\mathbb{R}^{N})}^{N}
+
\|\nabla Eu\|_{L^{N}(\mathbb{R}^{N})}^{N}
\leq
C
\left(
\|u\|_{L^{N}(\Omega)}^{N}
+
\|\nabla u\|_{L^{N}(\Omega)}^{N}
\right),
\end{equation*}
where $C=C(N,\Omega)>0$. Combining the above two inequalities yields
\begin{equation*}
   \|u\|_{L^{q}(\Omega)}
\leq
C\,q^{\frac{N-1}{N}}
\left(
\|u\|_{L^{N}(\Omega)}^{N}
+
\|\nabla u\|_{L^{N}(\Omega)}^{N}
\right)^{\frac{1}{N}}, 
\end{equation*}
for every $u\in W^{1,N}(\Omega)$. Next, by the Poincar\'e inequality
(\cite[Section 5.8.1, Theorem 1]{EvansBook}), the above inequality reduces to 
\begin{equation*}
    \|u-(u)_{\Omega}\|_{L^{q}(\Omega)}
\leq
C\,q^{\frac{N-1}{N}}
\|\nabla u\|_{L^{N}(\Omega)},
\end{equation*}
where $C=C(N,\Omega)>0$. Now apply the above inequality to the function $u(\lambda x)$. Using the identity $\fint_{\Omega} u(\lambda x)\,dx = \fint_{\Omega_{\lambda}} u(x)\,dx$, multiplying both sides of the inequality by $\frac{1}{|\Omega|^{1/q}}$, and readjusting the constant $C=C(N,\Omega)>0$, we obtain
\begin{equation*}
    \left(
\fint_{\Omega}
|u(\lambda x)-(u)_{\Omega_{\lambda}}|^{q}\,dx
\right)^{\frac{1}{q}}
\leq
C\,q^{\frac{N-1}{N}}
\left(
\lambda^{N}
\int_{\Omega}
|\nabla u(\lambda x)|^{N}\,dx
\right)^{\frac{1}{N}},
\end{equation*}
Finally, making the change of variables $y=\lambda x$, we obtain
\begin{equation*}
  \left(
\fint_{\Omega_{\lambda}}
|u(y)-(u)_{\Omega_{\lambda}}|^{q}\,dy
\right)^{\frac{1}{q}}
\leq
C\,q^{\frac{N-1}{N}}
\left(
\int_{\Omega_{\lambda}}
|\nabla u(y)|^{N}\,dy
\right)^{\frac{1}{N}},  
\end{equation*}
which completes the proof.
\end{proof}

The next lemma provides an estimate for the difference between the averages of a function over two disjoint sets. This estimate plays an important role in comparing the averages of $u$ on adjacent annuli during the summation
arguments.

\begin{lemma}\label{Lemma: on two disjoint set}
    Let $E$ and $F$ be two disjoint sets in $\mathbb{R}^{N}$ and $q \geq 1$. Then 
    \begin{equation}
        |(u)_{E} - (u)_{F}|^{q} \leq 2^{q} \frac{|E \cup F|}{\min \{ |E|, |F| \} }  \fint_{E \cup F} |u(x)-(u)_{E \cup F}|^{q} \, dx  . 
    \end{equation}
\end{lemma}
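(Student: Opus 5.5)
Write $G := E \cup F$ and $c := (u)_{G}$. The plan is to compare each of the two averages with the common average $c$ over the union, and then pass from an average over $E$ (or $F$) to an average over $G$ at the cost of the factor $|G|/|E|$ (or $|G|/|F|$).

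First, by the triangle inequality,
\begin{equation*}
|(u)_{E} - (u)_{F}| \leq |(u)_{E} - c| + |(u)_{F} - c| .
\end{equation*}
Raising to the power $q \geq 1$ and using convexity of $t \mapsto t^{q}$ in the form $(a+b)^{q} \leq 2^{q-1}(a^{q}+b^{q})$ gives
\begin{equation*}
|(u)_{E} - (u)_{F}|^{q} \leq 2^{q-1}\bigl(|(u)_{E}-c|^{q} + |(u)_{F}-c|^{q}\bigr).
\end{equation*}

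Next, each term is controlled by Jensen's inequality. Since $(u)_{E} - c = \fint_{E}(u-c)\,dx$, we have
\begin{equation*}
|(u)_{E}-c|^{q} \leq \fint_{E} |u-c|^{q}\,dx = \frac{1}{|E|}\int_{E}|u-c|^{q}\,dx ,
\end{equation*}
and the same bound holds with $F$ in place of $E$.

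The last step uses disjointness. Enlarge the domain of integration in each term to $G$ and bound $1/|E|$ and $1/|F|$ by $1/\min\{|E|,|F|\}$:
\begin{equation*}
\frac{1}{|E|}\int_{E}|u-c|^{q}\,dx + \frac{1}{|F|}\int_{F}|u-c|^{q}\,dx \leq \frac{1}{\min\{|E|,|F|\}}\int_{G}|u-c|^{q}\,dx .
\end{equation*}
Here disjointness gives $\int_E + \int_F = \int_G$, which is sharper than bounding each term separately by the full integral over $G$. The right-hand side equals $\frac{|G|}{\min\{|E|,|F|\}}\fint_{G}|u-c|^{q}\,dx$. Combining the three displays yields the claim with constant $2^{q-1}$, and $2^{q-1} \leq 2^{q}$.

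There is no real obstacle. The only points to record are the implicit assumptions: $E$ and $F$ have positive finite measure and $u \in L^{q}(E\cup F)$, so that all averages are defined. Without disjointness one would get $2^{q}$ directly by bounding each term separately, which is presumably why the stated constant is $2^{q}$.
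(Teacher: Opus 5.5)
Your proposal is correct and follows the paper's proof step for step: the triangle inequality through the common average $(u)_{E\cup F}$, H\"older/Jensen on $E$ and on $F$ separately, and disjointness to merge $\int_E+\int_F=\int_{E\cup F}$ with the factor $1/\min\{|E|,|F|\}$. The only difference is that you use the convexity bound $(a+b)^{q}\le 2^{q-1}(a^{q}+b^{q})$ and so get the slightly better constant $2^{q-1}$. The paper's $2^{q}$ comes from a cruder version of that elementary inequality, not from avoiding disjointness; the paper uses disjointness exactly as you do.
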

\begin{proof} 
We begin by estimating $|(u)_E-(u)_F|^q$. Using the triangle inequality, we obtain
\begin{equation*}
\begin{split}
    |(u)_{E}-(u)_{F}|^{q}  & \leq 2^{q} |(u)_{E} - (u)_{E \cup F}|^{q} +  2^{q} | (u)_{F} + (u)_{E \cup F}|^{q} \\
       & = 2^{q} \Big| \fint_{E} \left\{ u(x) - (u)_{E \cup F} \right\}   dx  \Big|^{q} + 2^{q} \Big| \fint_{F} \left\{ u(x) - (u)_{E \cup F} \right\}  dx \Big|^{q}. 
\end{split}
\end{equation*}
    By using H$\ddot{\text{o}}$lder's inequality with ~$ \frac{1}{q} +  \frac{1}{q'} = 1$, we have
    \begin{equation*}
    \begin{split}
     |(u)_{E}-(u)_{F}|^{q}  &\leq  2^{q} \fint_{E} |u(x) - (u)_{E \cup F} |^{q}   dx + 2^{q} \fint_{F} | u(x) - (u)_{E \cup F} |^{q}  dx \\
       &\leq  \frac{2^{q}}{\text{min} \{ |E|, |F| \} } \int_{E \cup F} |u(x) - (u)_{E \cup F} |^{q}   dx \\&
       = 2^{q} \frac{|E \cup  F|}{\text{min} \{ |E|, |F| \}} \fint_{E \cup F} |u(x) - (u)_{E \cup F} |^{q}   dx .
    \end{split}
    \end{equation*}
    This completes the proof.
\end{proof}

\section{Proof of the Main Theorems}\label{Section 4}

In this section, we prove Theorem~\ref{Theorem: Quantitative stability critical Hardy} and Theorem~\ref{Theorem 2}, establishing a new quantitative stability estimate for the critical Hardy inequality and an improvement of the stability estimate \eqref{Cianchi Quantitative} due to Cianchi and Ferone \cite[Theorem~1.2]{CianchiFerone}. Our proof is completely rearrangement-free and improves the result of Cianchi and Ferone \cite[Theorem $1.2$]{CianchiFerone}. The proof is based on the scale-independent Sobolev inequality established in the previous section together with suitable summation arguments, which naturally lead to the introduction of the distance function. Throughout this section, we use the following notation:
\begin{equation}\label{Defn: Phi}
    \Phi(t) := \exp \left( t^{\frac{N}{N-1}} \right) -1, \quad \forall \, t \geq 0. 
\end{equation}

\smallskip

\begin{proof}[\textbf{Proof of Theorem \ref{Theorem: Quantitative stability critical Hardy}}]
Let $\Omega$ be a bounded open set containing the origin, and let
$\widetilde{R}=\sup_{x\in\Omega}|x|$ with $R>\widetilde{R}$. We first prove the improved quantitative stability estimate for functions
$u\in C_{c}(\Omega) \cap W^{1,N}_{0}(\Omega)$ with $u\geq0$. By extending $u$ by zero outside $\Omega$, we may regard $u$ as an element of
$C_{c}(B_{\widetilde{R}}(0)) \cap W^{1,N}_{0}(\Omega)$. Finally, we extend the result to every $u\in W_{0}^{1,N}(\Omega)$ by a standard density argument.

Define the Hardy deficit by
\begin{equation}\label{Defn: Hardy deficit}
    \mathcal{H}_{N,R}(u)
    :=
    \int_{\Omega} |\nabla u(x)|^{N}\,dx
    -
    \left(\frac{N-1}{N}\right)^{N}
    \bigintsss_{\Omega}
    \frac{|u(x)|^{N}}
    {|x|^{N}\left( \log\frac{R}{|x|} \right)^{N}}
    \,dx.
\end{equation}

We divide the proof into two cases: $\mathcal{H}_{N,R}(u)\leq1$
and $\mathcal{H}_{N,R}(u)>1$. In the second case, we normalize the
function by assuming $\|u\|_{\operatorname{Exp}L^{N/(N-1)}(\Omega)} \leq 1$.
This normalization is not required in the first case. Therefore, the argument
for the first case also remains valid under the above normalization.

\subsection{The Case \texorpdfstring{$\mathcal{H}_{N,R}(u)\leq 1$}{delta leq 1}:} In this subsection, we assume that
$\mathcal{H}_{N,R}(u)\leq1$. Since
$u\in C_{c}(B_{\widetilde{R}}(0)) \cap W^{1,N}_{0}(\Omega)$ and $u\geq0$, we define the set
\begin{equation*}
    \mathcal{G} := \{ x \in \Omega : v(x) > (\mathcal{H}_{N,R}(u))^{\frac{1}{N}} \},
\end{equation*}
where $v= u \omega^{-1}_{R}$, and $\omega_{R}(x) = \left( \log R/|x| \right)^{(N-1)/N}$. For any $\ell \in \mathbb{Z}$ and $\ell \leq -1$, define 
\begin{equation*}
    A_{\ell} := \{ x \in \mathbb{R}^{N} : 2^{\ell} R \leq |x| < 2^{\ell+1} R \}.
\end{equation*}
Since $\lim_{|x| \to 0} v(x) = u(x) \omega^{-1}_{R}(x) = 0$, choose the smallest integer
$\ell_{0}$ such that
\begin{equation*}
    v(x) \leq (\mathcal{H}_{N,R}(u))^{\frac{1}{N}}, \quad  \forall \, x \in A_{\ell_{0}} \quad \text{and} \quad \mathcal{G} \cap A_{\ell_{0}+1} \neq \phi \quad \Rightarrow (v)_{A_{\ell_{0}}} \leq (\mathcal{H}_{N,R}(u))^{\frac{1}{N}}. 
\end{equation*}
Therefore, using the inequality $0< v(x) - (\mathcal{H}_{N,R}(u))^{\frac{1}{N}} \leq v(x) -(v)_{A_{\ell_{0}}}$, for all $x \in \mathcal{G}$, and $q>N$, we obtain
\begin{align}\label{ineq4}
    \int_{\mathcal{G}} |v(x) - (\mathcal{H}_{N,R}(u))^{\frac{1}{N}}|^{q}  \left( \omega_{R}(x) \right)^{q} \,  dx  & \leq \int_{\mathcal{G}} |v(x) -(v)_{A_{\ell_{0}}}|^{q} \left( \omega_{R}(x) \right)^{q} \, dx \nonumber  \\ & \leq  \sum_{\substack{\ell \leq -1 \\ \mathcal{G} \cap A_{\ell} \neq \phi}} \int_{A_{\ell}} |v(x) -(v)_{A_{\ell_{0}}}|^{q} \left( \omega_{R}(x) \right)^{q} \, dx,
\end{align}
where $\omega_{R}$ is defined in \eqref{Defn: Omega}. Let $\ell\leq -1$ be such that $\mathcal{G}\cap A_{\ell}\neq\emptyset$. For
any $x\in A_{\ell}$, we have $2^{\ell}R\leq |x|$, which implies that $\log R/|x|  \leq (-\ell) \log 2$. Hence,  $\omega_{R}(x) \leq \left( (-\ell) \log 2 \right)^{(N-1)/N}$.  Therefore,
\begin{align}\label{ineqn1}
  \int_{A_{\ell}} |v(x) -(v)_{A_{\ell_{0}}}|^{q}  \left( \omega_{R}(x) \right)^{q} & \, dx  \leq ((-\ell) \log 2)^{\frac{N-1}{N} q} \int_{A_{\ell}} |v(x) -(v)_{A_{\ell_{0}}}|^{q} \, dx \nonumber \\ & \leq 2^{q} ((-\ell) \log 2)^{\frac{N-1}{N} q}   \int_{A_{\ell}} |v(x) -(v)_{A_{\ell}}|^{q} \, dx \nonumber \\ & \quad + 2^{q} ((-\ell) \log 2)^{\frac{N-1}{N} q}  |(v)_{A_{\ell}} - (v)_{A_{\ell_{0}}}|^{q}  |A_{\ell}|,
\end{align}
where $|A_{\ell}|$ denotes the Lebesgue measure of $A_{\ell}$. Applying Lemma \ref{Lemma: Sobolev inequality} with $\Omega = \{ x \in \mathbb{R}^{N} : R<|x|< 2R\}$ and $\lambda = 2^{\ell}$, we obtain
\begin{align}\label{ineq6}
    ((-\ell) \log 2)^{\frac{N-1}{N} q}  & \int_{A_{\ell}} |v(x) -(v)_{A_{\ell}}|^{q} \, dx  = ((-\ell) \log 2)^{\frac{N-1}{N} q}   |A_{\ell}| \fint_{A_{\ell}} |v(x) -(v)_{A_{\ell}}|^{q} \, dx \nonumber \\ & \leq C^{q} q^{\frac{N-1}{N} q} ((-\ell) \log 2)^{\frac{N-1}{N} q} |A_{\ell}|  \left( \int_{A_{\ell}} |\nabla v(x)|^{N} \, dx \right)^{\frac{q}{N}},
\end{align}
where $C=C(N,R)>0$. Now let $\ell\leq -2$. For any $x\in A_{\ell}$, we have $(-\ell-1) \log 2 < \log R/|x| $.  Hence, using the inequality $(-\ell) \leq 2(-\ell-1)$ for all $\ell\leq-2$,
together with the definition of $\omega_{R}$ in \eqref{Defn: Omega} and the
fact that $\operatorname{supp} v\subset\Omega$, we obtain
\begin{align*}
    ((-\ell) \log 2)^{\frac{N-1}{N} q}    \int_{A_{\ell}} |v(x) -(v)_{A_{\ell}}|^{q} \, dx  & \leq C^{q} q^{\frac{N-1}{N} q} |A_{\ell}| \left( \int_{A_{\ell}} |\nabla v(x)|^{N} \left( \log  \frac{R}{|x|} \right)^{N-1} \, dx \right)^{\frac{q}{N}} \\ & = C^{q} q^{\frac{N-1}{N} q} |A_{\ell}| \left( \int_{A_{\ell}} |\nabla v(x)|^{N} \left( \omega_{R}(x) \right)^{N} \, dx \right)^{\frac{q}{N}} \\ & \leq C^{q} q^{\frac{N-1}{N} q} |A_{\ell}| \left( \int_{\Omega} |\nabla v(x)|^{N} \left( \omega_{R}(x) \right)^{N} \, dx \right)^{\frac{q}{N}}.
\end{align*}
Consider the case $\ell=-1$. Since $\operatorname{supp} v\subset\Omega$,
$R>\widetilde{R}$, and $\widetilde{R} = \sup_{x \in \Omega} |x|$, we have $0<\log R/\widetilde{R} \leq \log R/|x|$ for all $x \in \Omega$. Therefore, applying \eqref{ineq6} and using the definition of $\omega_{R}$, we obtain
\begin{align*}
  (\log 2)^{\frac{N-1}{N} q}   \int_{A_{-1}} & |v(x) -(v)_{A_{-1}}|^{q} \, dx  \leq C^{q} q^{\frac{N-1}{N} q} (\log 2)^{\frac{N-1}{N} q} |A_{-1}|  \left( \int_{\Omega} |\nabla v(x)|^{N} \, dx \right)^{\frac{q}{N}} \\ & \leq C^{q} q^{\frac{N-1}{N} q} \left( \frac{\log 2}{  \log \frac{R}{\widetilde{R}}} \right)^{\frac{N-1}{N} q } |A_{-1}| \left( \int_{\Omega} |\nabla v(x)|^{N} \left( \omega_{R}(x) \right)^{N} \, dx \right)^{\frac{q}{N}}. 
\end{align*}
Combining this estimate with the case $\ell\leq-2$, we obtain, for every
$\ell\leq-1$,
\begin{equation*}
    ((-\ell) \log 2)^{\frac{N-1}{N} q}    \int_{A_{\ell}} |v(x) -(v)_{A_{\ell}}|^{q} \, dx \leq C^{q} q^{\frac{N-1}{N} q} |A_{\ell}| \left( \int_{\Omega} |\nabla v(x)|^{N} \left( \omega_{R}(x) \right)^{N} \, dx \right)^{\frac{q}{N}},
\end{equation*}
where $C=C(N,R,\widetilde{R})>0$ is independent of $\ell$. Therefore, using $\sum_{\ell \leq -1} |A_{\ell}| \leq |B_{R}(0)|$, we obtan
\begin{align}\label{ineq2}
  \sum_{\ell \leq -1} ((-\ell) \log 2)^{\frac{N-1}{N} q}    \int_{A_{\ell}} |v(x) -(v)_{A_{\ell}}|^{q} \, dx  \leq C^{q} q^{\frac{N-1}{N} q} \left( \int_{\Omega} |\nabla v(x)|^{N} \left( \omega_{R}(x) \right)^{N} \, dx \right)^{\frac{q}{N}},  
\end{align}
where $C=C(N,R,\widetilde{R})>0$ is a uniform constant. Next, we apply Lemma~\ref{Lemma: on two disjoint set} with
$E=A_{k}$ and $F=A_{k+1}$. Then, applying
Lemma~\ref{Lemma: Sobolev inequality} with $\Omega = \{ x \in \mathbb{R}^{N} : R< |x|<4R \}$ and $\lambda =2^{k}$, we obtain
\begin{align}\label{ineq7}
    ((-\ell) \log 2)^{\frac{N-1}{N} q} & |(v)_{A_{\ell}} - (v)_{A_{\ell_{0}}}|^{q}  \leq  ((-\ell) \log 2)^{\frac{N-1}{N} q}  2^{q} \sum_{k=\ell_{0}}^{\ell-1}  |(v)_{A_{k}} - (v)_{A_{k+1}}|^{q} \nonumber \\ & \leq C^{q} q^{\frac{N-1}{N}q}  ((-\ell) \log 2)^{\frac{N-1}{N} q} \sum_{k=\ell_{0}}^{\ell-1} \left( \int_{A_{k} \cup A_{k+1}} |\nabla v(x)|^{N} \, dx \right)^{\frac{q}{N}}.
\end{align}
For $\ell\leq-1$ and $\ell_{0}\leq k<\ell$, excluding the case $\ell=-1$ and $k=-2$, let $x\in A_{k}\cup A_{k+1}$. Then $ (-k-2) \log 2 \leq \log R/|x| $. Hence, using the inequality $\frac{(-\ell) \log 2}{(-k-2) \log 2} \leq 2$ , together with the definition of $\omega_{R}$ in
\eqref{Defn: Omega}, the estimate above yields
\begin{align*}
    ((-\ell) \log 2)^{\frac{N-1}{N} q}  & |(v)_{A_{\ell}} - (v)_{A_{\ell_{0}}}|^{q}  \\ & \leq C^{q} q^{\frac{N-1}{N}q} \sum_{k=\ell_{0}}^{\ell-1} \left( \int_{A_{k} \cup A_{k+1}} |\nabla v(x)|^{N} \left( \log \frac{R}{|x|}  \right)^{N-1} \, dx \right)^{\frac{q}{N}} \\ & \leq C^{q} q^{\frac{N-1}{N}q} \left( \int_{\Omega} |\nabla v(x)|^{N} \left( \omega_{R}(x) \right)^{N} \, dx \right)^{\frac{q}{N}} . 
\end{align*}
Now consider the exceptional case $\ell=-1$ and $k=-2$. Since
$\operatorname{supp} v \subset\Omega$ and  $0<\log R/\widetilde{R} \leq \log R/|x|$ for all $x \in \Omega$, the right-hand side of \eqref{ineq7} satisfies
\begin{align*}
    C^{q} q^{\frac{N-1}{N}q}  (\log 2)^{\frac{N-1}{N} q} &  \left( \int_{A_{-2} \cup A_{-1}} |\nabla v(x)|^{N} \, dx \right)^{\frac{q}{N}} \\ & \leq C^{q} q^{\frac{N-1}{N}q}  \left(\frac{\log 2}{\log \frac{R}{\widetilde{R}}} \right)^{\frac{N-1}{N} q}  \left( \int_{\Omega} |\nabla v(x)|^{N} \left( \log \frac{R}{|x|}  \right)^{N-1} \, dx \right)^{\frac{q}{N}} \\ & \leq C^{q} q^{\frac{N-1}{N}q} \left( \int_{\Omega} |\nabla v(x)|^{N} \left( \omega_{R}(x) \right)^{N} \, dx \right)^{\frac{q}{N}},
\end{align*}
where $C=C(N,R,\widetilde{R})>0$ is a uniform constant. Combining the above estimates with \eqref{ineq7}, we conclude that for any $\ell \leq -1$,
\begin{align*}
    ((-\ell) \log 2)^{\frac{N-1}{N} q} & |(v)_{A_{\ell}} - (v)_{A_{\ell_{0}}}|^{q} \leq C^{q} q^{\frac{N-1}{N}q} \left( \int_{\Omega} |\nabla v(x)|^{N} \left( \omega_{R}(x) \right)^{N} \, dx \right)^{\frac{q}{N}}.
\end{align*}
Therefore, using $\sum_{\ell \leq -1} |A_{\ell}| \leq |B_{R}(0)|$, we obtan
\begin{align}\label{ineq3}
  \sum_{\ell \leq -1}  ((-\ell) \log 2)^{\frac{N-1}{N} q} &  |(v)_{A_{\ell}} - (v)_{A_{\ell_{0}}}|^{q}   |A_{\ell}| \nonumber \\  & \leq  C^{q} q^{\frac{N-1}{N}q} \left( \sum_{\ell \leq -1} |A_{\ell}| \right) \left( \int_{\Omega} |\nabla v(x)|^{N} \left( \omega_{R}(x) \right)^{N} \, dx \right)^{\frac{q}{N}} \nonumber \\ & \leq  C^{q} q^{\frac{N-1}{N}q} \left( \int_{\Omega} |\nabla v(x)|^{N} \left( \omega_{R}(x) \right)^{N} \, dx \right)^{\frac{q}{N}}  ,
\end{align}
where $C=C(N,R,\widetilde{R})>0$ is a uniform constant. Combining the inequalities \eqref{ineqn1}, \eqref{ineq2}, and \eqref{ineq3}, we obtain
\begin{align*}
  \sum_{\ell \leq -1}  \int_{A_{\ell}}  |v(x) -(v)_{A_{\ell_{0}}}|^{q} \left( \omega_{R}(x) \right)^{q}  \, dx   \leq C^{q} q^{\frac{N-1}{N}q} \left( \int_{\Omega} |\nabla v(x)|^{N} \left( \omega_{R}(x) \right)^{N} \, dx \right)^{\frac{q}{N}} .
\end{align*}
Therefore, using the above inequality and the Hardy inequality with a remainder (see Lemma \ref{Lemma: Critical Hardy remainder}), the inequality \eqref{ineq4} yields
\begin{align}\label{eq:highq}
\int_{\mathcal{G}}
|v(x)-(\mathcal{H}_{N,R}(u))^{\frac{1}{N}}|^q
\left(\omega_{R}(x)\right)^{q}
\, dx
&\leq
C^q q^{\frac{N-1}{N}q}
\left(
\int_{\Omega}
|\nabla v(x)|^N
\left(\omega_{R}(x)\right)^{N}
\,dx
\right)^{\frac{q}{N}} \nonumber \\ & \leq
C^q q^{\frac{N-1}{N}q}
(\mathcal{H}_{N,R}(u))^{\frac{q}{N}},
\end{align}
for every $q>N$, where $C=C(N, R, \widetilde{R})>0$ is independent of $q$.

We now prove that the above estimate also holds for $1\le q\le N$.
Let
\[
F(x):=
\left|v(x)-(\mathcal{H}_{N,R}(u))^{\frac{1}{N}}\right|\omega_{R}(x).
\]
Since $N+1>N$, by \eqref{eq:highq} with $q=N+1$, we obtain
\begin{equation}
\int_{\mathcal G}|F(x)|^{N+1}\,dx
\le
C^{N+1}(N+1)^{\frac{N-1}{N}(N+1)}
(\mathcal{H}_{N,R}(u))^{\frac{N+1}{N}}.
\label{eq:Nplus1}
\end{equation}
Now let $1\leq q \leq N$. We apply H\"older's inequality with the conjugate exponents $r=\frac{N+1}{q}$, $r'=\frac{N+1}{N+1-q}$. Hence,
\begin{align*}
\int_{\mathcal G}|F(x)|^q\,dx
 \leq
\left(
\int_{\mathcal G}
|F(x)|^{N+1}\,dx
\right)^{\frac{q}{N+1}}
|\mathcal G|^{1-\frac{q}{N+1}}.
\end{align*}
Using \eqref{eq:Nplus1}, we obtain
\begin{align*}
\int_{\mathcal G}|F(x)|^q\,dx
&\le
\left[
C^{N+1}(N+1)^{\frac{N-1}{N}(N+1)}
(\mathcal{H}_{N,R}(u))^{\frac{N+1}{N}}
\right]^{\frac{q}{N+1}}
|\mathcal G|^{1-\frac{q}{N+1}}\\
&=
C^q
(N+1)^{\frac{N-1}{N}q}
(\mathcal{H}_{N,R}(u))^{\frac{q}{N}}
|\mathcal G|^{1-\frac{q}{N+1}}.
\end{align*}
Since $|\mathcal G|
\leq |B_{R}(0)|$, the quantity $(N+1)^{\frac{N-1}{N}q}
|\mathcal G|^{1-\frac{q}{N+1}}$ is bounded by $C^q$, where $C=C(N,R, \widetilde{R})>0$. Therefore,
\begin{equation*}
   \int_{\mathcal G}
|v(x)-(\mathcal{H}_{N,R}(u))^{\frac{1}{N}}|^q
\left( \omega_{R}(x) \right)^{q}
\,dx
\leq
C^{q}
(\mathcal{H}_{N,R}(u))^{\frac{q}{N}}, 
\end{equation*}
for every $1\le q\le N$. Combining this estimate with \eqref{eq:highq}, after enlarging the constant if necessary, we conclude that
\begin{equation*}
   \int_{\mathcal G}
|v(x)-(\mathcal{H}_{N,R}(u))^{\frac{1}{N}}|^{q}
\left(\omega_{R}(x)\right)^{q}
\,dx
\leq
C^q
q^{\frac{N-1}{N}q}
(\mathcal{H}_{N,R}(u))^{\frac{q}{N}}, 
\end{equation*}
for every $q \geq 1$.
Since $\mathcal{H}_{N,R}(u) \leq 1$, we have $(\mathcal{H}_{N,R}(u))^{\frac{q}{N}} \leq (\mathcal{H}_{N,R}(u))^{\frac{1}{N}}$. Therefore, using this and the definition of $v=u \omega^{-1}_{R}$ in the above inequality, we get
\begin{equation*}
    \int_{\mathcal{G}}  |u(x) - (\mathcal{H}_{N,R}(u))^{\frac{1}{N}} \omega_{R}(x)|^{q}  \,  dx \leq \mathcal{C}^{q} q^{\frac{N-1}{N}q} (\mathcal{H}_{N,R}(u))^{\frac{1}{N}}, 
\end{equation*}
where $\mathcal{C}=\mathcal{C}(N, R, \widetilde{R})>0$.

Now, for $\alpha>0$, using $q= \frac{nN}{N-1} \geq 1$, where $n \in \mathbb{N}$, we have
\begin{align*}
    \int_{\mathcal{G}} &  \left( \exp \left( \frac{\alpha}  {\mathcal{C}^{\frac{N}{N-1}}} |u(x) - (\mathcal{H}_{N,R}(u))^{\frac{1}{N}} \omega_{R}(x)|^{\frac{N}{N-1}}  \right) - 1 \right) \, dx  \\  & \leq \sum_{n=1}^{\infty}  \frac{1}{n!} \left( \frac{\alpha}{\mathcal{C}^{\frac{N}{N-1}}} \right)^{n} \int_{\mathcal{G}} |u(x) - (\mathcal{H}_{N,R}(u))^{\frac{1}{N}} \omega_{R}(x)|^{\frac{nN}{N-1}}  \,  dx \leq \left( \mathcal{H}_{N,R}(u) \right)^{\frac{1}{N}} \sum_{n=1}^{\infty} \frac{1}{n!}\left( \frac{nN \alpha}{N-1}  \right)^{n} .
\end{align*}
Using Stirling's approximation $n! \sim \sqrt{2 \pi n} \left( \frac{n}{e} \right)^{n}$ as $n \to \infty$, we can choose $\alpha>0$ sufficiently small such that $\sum_{n=1}^{\infty} \frac{1}{n!}\left( \frac{nN \alpha}{N-1}  \right)^{n} \leq 1$. Therefore, there exists a uniform constant $C =C (N,R, \widetilde{R}) >0$ such that 
\begin{equation*}
    \int_{\mathcal{G}}  \Phi \left( C |u(x) - (\mathcal{H}_{N,R}(u))^{\frac{1}{N}} \omega_{R}(x)| \right)  \, dx \leq \left( \mathcal{H}_{N,R}(u) \right)^{\frac{1}{N}},
\end{equation*}
where $\Phi$ is defined in \eqref{Defn: Phi}. Hence,
\begin{align}\label{Ineq delta leq 1}
    \inf_{a \geq 0} \int_{\mathcal{G}}  \Phi \left( C |u(x) - a \,\omega_{R}(x)|  \right)  \, dx    \leq   \int_{\mathcal{G}}  \Phi \left( C |u(x) - (\mathcal{H}_{N,R}(u))^{\frac{1}{N}} \omega_{R}(x)| \right)  \, dx  \leq  \left( \mathcal{H}_{N,R}(u) \right)^{\frac{1}{N}}.
\end{align}

Now, for any $x\in\mathcal{G}^{c}$, we have $v(x) \leq \left( \mathcal{H}_{N,R}(u) \right)^{\frac{1}{N}}$,  or equivalently, $u(x) \leq \left( \mathcal{H}_{N,R}(u) \right)^{\frac{1}{N}} \omega_{R}(x) $. Therefore, for any $x \in \mathcal{G}^{c}$, we have
\begin{equation*}
    |u(x) - \left( \mathcal{H}_{N,R}(u) \right)^{\frac{1}{N}} \omega_{R}(x) | \leq 2 \left( \mathcal{H}_{N,R}(u) \right)^{\frac{1}{N}} \omega_{R}(x). 
\end{equation*}
Then, for some constant $C>0$,
\begin{align*}
    \int_{\mathcal{G}^{c}} \Phi \left( C |u(x) - \left( \mathcal{H}_{N,R}(u) \right)^{\frac{1}{N}} \omega_{R}(x) |  \right) \, dx \leq \int_{\mathcal{G}^{c}} \Phi \left( 2 C \left( \mathcal{H}_{N,R}(u) \right)^{\frac{1}{N}} \omega_{R}(x)  \right) \, dx,
\end{align*}
where $\Phi$ is defined in \eqref{Defn: Phi}. Since $\Phi (st) \leq s \Phi(t)$ for all $t>0$, and $0<s \leq 1$, it follows by taking $s= \left( \mathcal{H}_{N,R}(u) \right)^{\frac{1}{N}} \leq 1$ that 
\begin{equation*}
    \int_{\mathcal{G}^{c}} \Phi \left( C |u(x) - \left( \mathcal{H}_{N,R}(u) \right)^{\frac{1}{N}} \omega_{R}(x) |  \right) \, dx \leq \left( \mathcal{H}_{N,R}(u) \right)^{\frac{1}{N}} \int_{\mathcal{G}^{c}} \Phi \left( 2 C  \omega_{R}(x)  \right) \, dx.
\end{equation*}
Choosing $C>0$ sufficiently small, we may assume that $\int_{\mathcal{G}^{c}} \Phi \left( 2 C  \omega_{R}(x)  \right) \, dx \leq 1$. Therefore, there exists a uniform constant $\mathcal{C} = \mathcal{C}(N, R, \widetilde{R})>0$ such that
\begin{align}\label{ineq5}
      \inf_{a \geq 0} \int_{\mathcal{G}^{c}}  \Phi \left( \mathcal{C} |u(x) - a \, \omega_{R}(x)|  \right) \, dx   \leq   \int_{\mathcal{G}^{c}}  \Phi \left( \mathcal{C} |u(x) - (\mathcal{H}_{N,R}(u))^{\frac{1}{N}} \omega_{R}(x)|  \right)  \, dx \leq  \left( \mathcal{H}_{N,R}(u) \right)^{\frac{1}{N}}.
\end{align}
Combining \eqref{Ineq delta leq 1} and \eqref{ineq5}, we conclude that for every $u\in C_{c}(\Omega) \cap W^{1,N}_{0}(\Omega)$ with $u\geq0$ and
$\mathcal{H}_{N,R}(u)\leq1$, there exists a constant
$\mathcal{C}=\mathcal{C}(N, R, \widetilde{R})>0$ such that
\begin{align*}
\mathcal{H}_{N,R}(u)  \geq  \left(  \inf_{a \geq 0} \int_{\Omega}  \Phi \left( \mathcal{C} |u(x) - a \, \omega_{R}(x)|  \right)  \, dx \right)^{N} .
\end{align*}

\subsection{The Case \texorpdfstring{$\mathcal{H}_{N,R}(u)> 1$}{delta > 1}:}  Now assume that $\mathcal{H}_{N,R}(u)>1$. We further assume that $\|u\|_{\operatorname{Exp} L^{\frac{N}{N-1}}(\Omega)}  \leq 1$, where $\|\cdot\|_{\operatorname{Exp}L^{\frac{N}{N-1}}(\Omega)}$ denotes the Luxemburg norm associated with the Young function $\Phi(t) = \exp \left( t^{\frac{N}{N-1}} \right)-1$, defined in \eqref{Defn: Exp Norm}. Then, by the definition of the Luxemburg
norm and $\|u\|_{\operatorname{Exp} L^{\frac{N}{N-1}}(\Omega)}  \leq 1$,
\begin{align*} 
\left( \mathcal{H}_{N,R}(u) \right)^{\frac{1}{N}} > 1  \geq \int_{\Omega}  \Phi \left( \frac{|u(x)|}{\|u\|_{\operatorname{Exp} L^{\frac{N}{N-1}}(\Omega)}} \right) dx  & \geq  \int_{\Omega}  \Phi \left( |u(x)| \right) dx \\ & \geq \inf_{a \geq 0}  \int_{\Omega}  \Phi \left(  |u(x) - a \, \omega_{R}(x)|  \right)  \, dx  .
\end{align*}
Therefore, in the case $\mathcal{H}_{N,R}(u)>1$, under the normalization
$\|u\|_{\operatorname{Exp}L^{\frac{N}{N-1}}(\Omega)} \leq 1$, we obtain
\begin{align*}
 \mathcal{H}_{N,R}(u)  \geq  \left(  \inf_{a \geq 0} \int_{\Omega}  \Phi \left(  |u(x) - a \, \omega_{R}(x)| \right)  \, dx \right)^{N} .
\end{align*}
Hence, combining the cases $\mathcal{H}_{N,R}(u)\leq1$ and $\mathcal{H}_{N,R}(u)>1$, we conclude that for every $u\in C_{c}(\Omega) \cap W^{1,N}_{0}(\Omega)$ with $u\geq0$ and
$\|u\|_{\operatorname{Exp}L^{\frac{N}{N-1}}(\Omega)} \leq 1$,
\begin{align}\label{ineq8}
 \mathcal{H}_{N,R}(u)  \geq  \left(  \inf_{a \geq 0} \int_{\Omega}  \Phi \left( C |u(x) - a \, \omega_{R}(x)|  \right)  \, dx \right)^{N} .
\end{align}
Now let $u\in C_{c}(\Omega) \cap W^{1,N}_{0}(\Omega)$ satisfy $\|u\|_{\operatorname{Exp}L^{\frac{N}{N-1}}(\Omega)} \leq 1$, and define $u_{+}(x) = \max \{ u(x), 0 \}$ and $u_{-} = \max \{ -u(x), 0 \}$. Then $u_{+},u_{-}\geq0$ and
\begin{equation*}
    \max \left\{ \|u_{+}\|_{\operatorname{Exp}L^{\frac{N}{N-1}}(\Omega)} \, , \, \|u_{-}\|_{\operatorname{Exp}L^{\frac{N}{N-1}}(\Omega)} \right\} \leq \|u\|_{\operatorname{Exp}L^{\frac{N}{N-1}}(\Omega)} \leq 1.
\end{equation*}
Hence, using the convexity of
$\Phi$, defined in \eqref{Defn: Phi}, the decomposition $u=u_{+}-u_{-}$, and the inequality \eqref{ineq8}, we obtain
\begin{align*}
 & \left(  \inf_{a \in \mathbb{R}} \int_{\Omega}   \Phi \left( \frac{C}{2}  |u(x) - a \, \omega_{R}(x)| \right) \, dx \right)^{N} \\   & \hspace{3cm} = \left(  \inf_{b,c \geq 0} \int_{\Omega}  \Phi \left( \frac{C}{2}  |u_{+}(x) - u_{-}(x) - (b-c) \, \omega_{R}(x)| \right) \, dx \right)^{N} \\ & \hspace{3cm} \leq \left( \frac{1}{2}  \sum_{\pm} \inf_{a \geq 0} \int_{\Omega}   \Phi \left( C  |u_{\pm}(x) - a \, \omega_{R}(x)|  \right)  \, dx \right)^{N} \\ & \hspace{3cm} \leq \frac{1}{2} \left(\mathcal{H}_{N,R}(u_{+}) + \mathcal{H}_{N,R}(u_{-}) \right) \leq  \mathcal{H}_{N,R}(u).
\end{align*}
Finally, to remove the assumption
$\|u\|_{\operatorname{Exp}L^{\frac{N}{N-1}}(\Omega)}\leq1$,
apply the above inequality to
\begin{equation*}
    \widetilde{u} = \frac{u}{\|u\|_{\operatorname{Exp}L^{\frac{N}{N-1}}(\Omega)}}.
\end{equation*}
Using the definition of $\Phi$, we conclude that for any $u \in C_{c}(\Omega) \cap W^{1,N}_{0}(\Omega)$, there exists a constant
$C=C(N,R,\widetilde{R})>0$ such that
\begin{align*}
 \mathcal{H}_{N,R}(u) &  \geq \|u\|^{N}_{\operatorname{Exp}L^{\frac{N}{N-1}}(\Omega)}  \left(  \inf_{a \in \mathbb{R}} \bigintsss_{\Omega}  \Phi \left( C \frac{|u(x) - a \, \omega_{R}(x)|}{\|u\|_{\operatorname{Exp}L^{\frac{N}{N-1}}(\Omega)}}   \right)  \, dx \right)^{N}  \\ & = \|u\|^{N}_{\operatorname{Exp}L^{\frac{N}{N-1}}(\Omega)} \, \bigl(\mathcal{D}_{N,R,C}(u)\bigr)^{N}  .
\end{align*}
Let $u \in W^{1,N}_{0}(\Omega)$. By density, there exists a sequence
$\{u_{n}\} \subset C^{\infty}_{c}(\Omega)$ such that
\begin{equation*}
\|u_{n}-u\|_{W^{1,N}(\Omega)} \to 0,
\qquad \text{as} \quad n\to\infty.
\end{equation*}
Using the continuous embedding $W^{1,N}_{0}(\Omega)
\subset
\operatorname{Exp}L^{\frac{N}{N-1}}(\Omega)$, we have
\begin{equation*}
  \|u_{n}-u\|_{\operatorname{Exp}L^{\frac{N}{N-1}}(\Omega)}
\to 0 
\quad \text{as} \quad n\to\infty.
\end{equation*}
Moreover, up to a subsequence, $u_n\to u$ almost everywhere in $\Omega$. Therefore, for every fixed $a\in\mathbb{R}$, Fatou's lemma yields
\begin{align*}
& \bigintsss_{\Omega} \Phi \left(
\frac{ C|u(x)-a \, \omega_R(x)|}{ \|u\|_{\operatorname{Exp}L^{\frac{N}{N-1}}(\Omega)}}\right) \, dx  \leq \liminf_{n\to\infty} \bigintsss_{\Omega}  \Phi \left(\frac{ C|u_n(x)-a \, \omega_R(x)|}{ \|u_n\|_{\operatorname{Exp}L^{\frac{N}{N-1}}(\Omega)}}\right) \, dx.
\end{align*}
Taking the infimum over $a\in\mathbb{R}$, we obtain
\begin{equation*}
\mathcal{D}_{N,R,C}(u)
\leq
\liminf_{n\to\infty}\mathcal{D}_{N,R,C}(u_n).
\end{equation*}
Therefore, passing to the limit in the inequality established for
$u_{n}\in C^{\infty}_{c}(\Omega)$, we obtain
\begin{equation*}
\mathcal{H}_{N,R}(u)
\geq
\|u\|^{N}_{\operatorname{Exp}L^{\frac{N}{N-1}}(\Omega)}
\bigl(\mathcal{D}_{N,R,C}(u)\bigr)^{N}.
\end{equation*}
This completes the proof of the theorem.
\end{proof}

\begin{proof}[\textbf{Proof of Theorem \ref{Theorem 2}}]
Let $\Omega$ be a bounded open set containing the origin, and let
$\widetilde{R}=\sup_{x\in\Omega}|x|$ with $R\geq\widetilde{R}$.
Let $u\in C_{c}(\Omega) \cap W^{1,N}_{0}(\Omega)$ with $u\geq0$. By extending $u$ by zero outside $\Omega$, we may regard $u$ as an element of $C_c(B_{\widetilde{R}}(0))\cap W^{1,N}_{0}(\Omega)$. Assume that
\begin{equation*}
  \| u\|_{\mathcal{L}^{N, R}(\Omega)}
= \left( \frac{N}{N-1} \right),
\end{equation*}
where $\| u\|_{\mathcal{L}^{N, R}(\Omega)}$ is defined in \eqref{Hardy Littlewood inequality}. Using the definition of the Schwarz symmetrization $u^{\#}(x)=u^{*}\bigl(|B_{1}(0)||x|^{N}\bigr)$ and passing to polar coordinates followed by the change of variables $s=|B_{1}(0)|r^{N}$, we obtain
\begin{align*}
\int_{B_{\widetilde R}(0)}
\frac{|u^{\#}(x)|^{N}}
{|x|^{N}\left(1+\log\frac{R}{|x|}\right)^{N}}\,dx
=|B_{1}(0)|N^{N}
\int_{0}^{|B_{\widetilde R}(0)|}
\frac{|u^{*}(s)|^{N}}
{\left(N+\log\frac{|B_{1}(0)|R^{N}}{s}\right)^{N}}
\,\frac{ds}{s}.
\end{align*}
Consequently, by the definition of the Lorentz--Zygmund norm in
\eqref{Lorentz Zygmund Norm},
\begin{equation*}
\| u\|_{\mathcal{L}^{N, R}(\Omega)}
=
|B_{1}(0)|^{\frac{1}{N}}N
\|u\|_{L^{\infty,N}(\log L)^{-1}(B_{\widetilde{R}}(0)),R}.
\end{equation*}
Using the continuous embedding $L^{\infty,N}(\log L)^{-1}(\Omega)
\subsetneq 
\operatorname{Exp}L^{\frac{N}{N-1}}(\Omega)$ (see \cite[Equation $1.16$]{CianchiFerone}), together with the above estimate, and $\operatorname{supp} u \subset \Omega$, there exists a constant $C=C(N,R, \widetilde{R})>0$ such that
\begin{equation}\label{ineq11}
    \| u\|_{\operatorname{Exp}L^{\frac{N}{N-1}}(\Omega)} \leq C \| u\|_{\mathcal{L}^{N, R}(\Omega)}.
\end{equation}

Define the Hardy deficit by
\begin{equation}\label{Defn: Hardy deficit 2}
    \widetilde{\mathcal{H}}_{N, R}(u)
    :=
    \int_{\Omega} |\nabla u(x)|^{N}\,dx
    -
    \left(\frac{N-1}{N}\right)^{N}
    \bigintsss_{\Omega}
    \frac{|u(x)|^{N}}
    {|x|^{N}\left( 1+ \log\frac{R}{|x|} \right)^{N}}
    \,dx.
\end{equation}
First, assume that $\widetilde{\mathcal{H}}_{N,R}(u)\leq1$. 
Replacing $R$ by $Re>\widetilde{R}$ in the Hardy deficit 
$\mathcal{H}_{N,R}(u)$, where $R\geq\widetilde{R}$, and using $\log \frac{Re}{|x|} = 1+ \log \frac{R}{|x|}$, we obtain
\begin{equation*}
 \mathcal{H}_{N, Re}(u)  =  \widetilde{\mathcal{H}}_{N, R}(u) \leq 1 .
\end{equation*}
Therefore, by the proof of Theorem~\ref{Theorem: Quantitative stability critical Hardy}, under the assumption
$\mathcal{H}_{N,Re}(u)\leq1$, for any $u\in C_{c}(\Omega) \cap W^{1,N}_{0}(\Omega)$ with $u\geq0$ and
$\widetilde{\mathcal{H}}_{N,R}(u)\leq1$, we have
\begin{align*}
\int_{\Omega} |\nabla u(x)|^{N}\,dx - & \left(\frac{N-1}{N}\right)^{N} \bigintsss_{\Omega}
\frac{|u(x)|^{N}} {|x|^{N}\left(1+\log\frac{R}{|x|}\right)^{N}} \,dx  \\ & \geq \left( \inf_{a\geq0}
\int_{\Omega} \left(
\exp\left( C|u(x)-a\, \widetilde{\omega}_R(x)|^{\frac{N}{N-1}} \right)-1
\right) \,dx \right)^N,
\end{align*}
where $C=C(N,R, \widetilde{R})>0$ and $\widetilde{\omega}_{R}$ is defined in \eqref{Defn: Omege widetilde}.

Now, assume that $\widetilde{\mathcal{H}}_{N,R}(u)>1$. Using the normalization $ \| u\|_{\mathcal{L}^{N, R}(\Omega)} = \left( \frac{N}{N-1} \right)$, the inequality \eqref{ineq11}, and the function $\Phi$, defined in \eqref{Defn: Phi}, we obtain
\begin{align*}
   \left(  \widetilde{\mathcal{H}}_{N, R}(u) \right)^{\frac{1}{N}} > 1  & \geq \int_{\Omega} \Phi \left(\ \frac{|u(x)|}{\| u\|_{\operatorname{Exp}L^{\frac{N}{N-1}}(\Omega)}} \right) \, dx \geq \int_{\Omega} \Phi \left( C |u(x)| \right) \, dx \\ &  \geq  \inf_{a \geq 0} \int_{\Omega}  \Phi \left( C  |u(x) - a \, \widetilde{\omega}_{R}(x)|  \right) \, dx .
\end{align*}
Combining the cases $\widetilde{\mathcal{H}}_{N,R}(u)\leq1$ and
$\widetilde{\mathcal{H}}_{N,R}(u)>1$, we conclude that, for any
$u\in C_{c}(\Omega) \cap W^{1,N}_{0}(\Omega)$ with $u\geq0$ satisfying $ \| u\|_{\mathcal{L}^{N, R}(\Omega)} = \left( \frac{N}{N-1} \right)$, we have
\begin{equation*}
  \widetilde{\mathcal{H}}_{N, R}(u) \geq   \left(  \inf_{a \geq 0} \int_{\Omega}  \Phi \left( C |u(x) - a \, \widetilde{\omega}_{R}(x)| \right)  \, dx \right)^{N},
\end{equation*}
where $C=C(N,R, \widetilde{R})>0$. 

To remove the assumption
$ \| u\|_{\mathcal{L}^{N, R}(\Omega)} = \left( \frac{N}{N-1} \right)$,
apply the above inequality to
\begin{equation*}
    \widetilde{u} = \frac{u}{\| u\|_{\mathcal{L}^{N, R}(\Omega)} } \left( \frac{N}{N-1} \right) ,
\end{equation*}
we obtain
\begin{equation}\label{ineq10}
  \widetilde{\mathcal{H}}_{N, R}(u) \geq \left( \frac{N-1}{N} \right)^{N}  \| u\|^{N}_{\mathcal{L}^{N, R}(\Omega)}  \left(  \inf_{a \geq 0} \int_{\Omega}  \Phi \left( C \frac{|u(x) - a \, \widetilde{\omega}_{R}(x)|}{\| u\|_{\mathcal{L}^{N, R}(\Omega)}}  \right) \, dx \right)^{N}  .
\end{equation}
Now let $u\in C_{c}(\Omega) \cap W^{1,N}_{0}(\Omega)$, and define $u_{+}(x) = \max \{ u(x), 0 \}$ and $u_{-} = \max \{ -u(x), 0 \}$. Then $u_{+},u_{-}\geq0$.  

Hence, using the convexity of
$\Phi$, defined in \eqref{Defn: Phi}, and the decomposition $u=u_{+}-u_{-}$, we obtain
\begin{align*}
 & \left( \frac{N-1}{N} \right)^{N}  \| u\|^{N}_{\mathcal{L}^{N, R}(\Omega)} \left(  \inf_{a \in \mathbb{R}} \int_{\Omega}   \Phi \left( \frac{C}{2}  \frac{|u(x) - a \, \widetilde{\omega}_{R}(x)|}{ \| u\|_{\mathcal{L}^{N, R}(\Omega)}}  \right) \, dx \right)^{N} \\   & \hspace{1cm} = \left( \frac{N-1}{N} \right)^{N}  \| u\|^{N}_{\mathcal{L}^{N, R}(\Omega)} \left(  \inf_{b,c \geq 0} \int_{\Omega}  \Phi \left( \frac{C}{2}  \frac{|u_{+}(x) - u_{-}(x) - (b-c) \, \widetilde{\omega}_{R}(x)|}{ \| u\|_{\mathcal{L}^{N, R}(\Omega)}}  \right) \, dx \right)^{N} \\ & \hspace{1cm} \leq \frac{1}{2} \sum_{\pm} \left( \frac{N-1}{N} \right)^{N}   \left(  \inf_{a \geq 0} \frac{\bigintsss_{\Omega}   \Phi \left( C \frac{|u_{\pm}(x) - a \, \widetilde{\omega}_{R}(x)|}{\| u\|_{\mathcal{L}^{N, R}(\Omega)}}   \right)  \, dx}{\| u\|^{-1}_{\mathcal{L}^{N, R}(\Omega)}}  \right)^{N}.
\end{align*}
Also,  $t \mapsto \frac{\Phi(st)}{t}$ is increasing for any fixed $s \geq 0$. Therefore, using this fact, $\|u_{\pm}\|_{\mathcal{L}^{N, R}(\Omega)} \leq \|u\|_{\mathcal{L}^{N, R}(\Omega)}$, and the inequality \eqref{ineq10} for $u_{+}$ and $u_{-}$, the above inequality reduces to
\begin{align*}
   & \left( \frac{N-1}{N} \right)^{N} \| u\|^{N}_{\mathcal{L}^{N, R}(\Omega)} \left(  \inf_{a \in \mathbb{R}} \int_{\Omega}   \Phi \left( \frac{C}{2}  \frac{|u(x) - a \, \widetilde{\omega}_{R}(x)|}{ \| u\|_{\mathcal{L}^{N, R}(\Omega)}}  \right) \, dx \right)^{N} \\ & \hspace{2cm} \leq  \frac{1}{2} \sum_{\pm} \left( \frac{N-1}{N} \right)^{N}   \left(  \inf_{a \geq 0} \frac{\bigintsss_{\Omega}   \Phi \left( C \frac{|u_{\pm}(x) - a \, \widetilde{\omega}_{R}(x)|}{\| u_{\pm}\|_{\mathcal{L}^{N, R}(\Omega)}}   \right)  \, dx}{\| u_{\pm}\|^{-1}_{\mathcal{L}^{N, R}(\Omega)}}  \right)^{N} \\ & \hspace{2cm}\leq \frac{1}{2} \left( \widetilde{\mathcal{H}}_{N,R}(u_{+}) + \widetilde{\mathcal{H}}_{N,R}(u_{-}) \right) \leq \widetilde{\mathcal{H}}_{N,R}(u).
\end{align*}
From the inequality \eqref{Hardy Littlewood inequality}, the above estimate, and the definition of $\widetilde{\mathcal{H}}_{N,R}(u)$, defined in \eqref{Defn: Hardy deficit 2}, we obtain
\begin{align*}
 \int_{\Omega} |\nabla u(x)|^{N}\,dx
-
\left(\frac{N-1}{N}\right)^{N} &
\bigintsss_{\Omega}
\frac{|u(x)|^{N}}
{|x|^{N} \left( 1+ \log \frac{R}{|x|} \right)^{N}}
\,dx \\ &
\geq
\left(\frac{N-1}{N}\right)^{N} \left(
\bigintsss_{\Omega}
\frac{|u(x)|^{N}}
{|x|^{N} \left( 1+ \log \frac{R}{|x|} \right)^{N}}
\,dx \right)  \bigl(\widetilde{\mathcal{D}}_{N,R,C}(u)\bigr)^{N}.
\end{align*}
Similarly, following the same approach as in the proof of Theorem \ref{Theorem: Quantitative stability critical Hardy}, the above inequality holds for every $u \in W^{1,N}_{0}(\Omega)$. This completes the proof of the theorem. 
\end{proof}

\bigskip

\textbf{Acknowledgement:} The author gratefully acknowledges the financial support of the Anusandhan National Research Foundation (ANRF) through the National Postdoctoral Fellowship (PDF/2025/004611). The author also thanks the Theoretical Statistics and Mathematics Unit, Indian Statistical Institute, Delhi Centre, India, for providing a supportive and stimulating research environment. The author is grateful to Prof. Debdip Ganguly for their helpful discussions on this topic.


\end{document}